\theoremstyle{plain}
\newcommand{\comment}[1]{}
\theoremstyle{theorem}
    \newtheorem{theorem}{Theorem}
    \newtheorem{lemma}[theorem]{Lemma}
\newtheorem*{theorem*}{Theorem}
\newcommand\tr{{\mbox{tr}}}
\newcommand\mnote[1]{} 
\newcommand\be{\begin{equation*}}
\newcommand\ee{\end{equation*}}
\newcommand\ben{\begin{equation}}
\newcommand\een{\end{equation}}
\newcommand\bes{\begin{eqnarray*}}
\newcommand\ees{\end{eqnarray*}}
\newcommand\bex{\begin{exercise}}
\newcommand\eex{\end{exercise}}
\newcommand\beg{\begin{example}}
\newcommand\eeg{\end{example}}
\newcommand\benu{\begin{enumerate}}
\newcommand\eenu{\end{enumerate}}
\newcommand\beit{\begin{itemize}}
\newcommand\eeit{\end{itemize}}
\newcommand\berk{\begin{remark}}
\newcommand\eerk{\end{remark}}
\newcommand\bdefn{\begin{defintion}}
\newcommand\edefn{\end{definition}}
\newcommand\bthm{\begin{theorem}}
\newcommand\ethm{\end{theorem}}
\newcommand\bprf{\begin{proof}}
\newcommand\eprf{\end{proof}}
\newcommand\blem{\begin{lemma}}
\newcommand\elem{\end{lemma}}
\newcommand{\sm}{{\raise0.3ex\hbox{$\scriptstyle \setminus$}}}
\def\CHI{\mathchoice%
{\raise2pt\hbox{$\chi$}}%
{\raise2pt\hbox{$\chi$}}%
{\raise1.3pt\hbox{$\scriptstyle\chi$}}%
{\raise0.8pt\hbox{$\scriptscriptstyle\chi$}}}
\def\smalloplus{\raise1pt\hbox{$\,\scriptstyle \oplus\;$}}
\newtheorem{thm}{Theorem}[section]
\newtheorem{defn}[thm]{Definition}
\newtheorem{lem}[thm]{Lemma}
\newtheorem{rem}[thm]{Remark}
\numberwithin{equation}{section}
\def\textmatrix#1&#2\\#3&#4\\{\bigl({#1 \atop #3}\ {#2 \atop #4}\bigr)}
\def\dispmatrix#1&#2\\#3&#4\\{\left({#1 \atop #3}\ {#2 \atop #4}\right)}
\begin{document}

\title[Interpolating sequences and Toeplitz corona theorem]{Interpolating sequences and the Toeplitz corona theorem on the symmetrized bidisk }

\author[Bhattacharyya]{Tirthankar Bhattacharyya}
\address[Bhattacharyya]{Department of Mathematics, Indian Institute of Science, Bangalore 560 012, India.}
\email{tirtha@iisc.ac.in}

\author[Sau]{Haripada Sau}
\address[Sau]{Department of Mathematics, Virginia Tech, Blacksburg, VA 24061-0123, USA.}
\email{sau@vt.edu, haripadasau215@gmail.com}
\thanks{MSC2010: 46E22, 47A13, 47A25, 47A56}
\thanks{Key words and phrases: Symmetrized bidisk, Realization formula, Interpolation, Operator--valued kernel, Reproducing kernel Hilbert space, Toeplitz corona problem.}
\thanks{This research is supported by University Grants Commission, India via CAS. Work of the second author was largely done at the Indian Institute of Technology Bombay.}
\date{\today}
\maketitle

\begin{abstract}
This paper is a continuation of work done in \cite{BS}. It contains two new theorems about bounded holomorphic functions on the symmetrized bidisk -- a characterization of interpolating sequences and a Toeplitz corona theorem.
\end{abstract}

\section{Statement of main results}

\subsection{Introduction}
This paper extends two important results in operator theory and complex function theory on the
unit disk to the symmetrized bidisk
$$\mathbb{G}=\{(z_1+z_2,z_1z_2): |z_1|<1, |z_2|<1\},$$
namely, a characterization of interpolating sequences (Theorem 1) and the Toeplitz-corona theorem (Theorem 2).  These two seemingly uncorrelated results are unified by the fact that both are applications of the statement and the method of proof of the Realization Theorem for operator-valued bounded holomorphic functions on $\mathbb G$ of norm no larger than $1$. The symmetrized bidisk is non-convex, but polynomially convex. It is interesting to both complex analysts and operator theorists -- for dilation and related results (\cite{ay-Edinburgh}, \cite{BPSR}), for a rich function theory (\cite{AYRealization}, \cite{BS}) and for its complex geometry (\cite{ay-geometry}, \cite{KZ}).

Let $\mathcal L$ be a Hilbert space. All Hilbert spaces in this note are separable and are over the complex field. Let $\mathcal{B}(\mathcal L)$ denote the algebra of all bounded operators acting on $\mathcal L$. A function $k:\mathbb G \times \mathbb G\to \mathcal B(\mathcal L)$  is called  {\it{positive semi-definite}} if for all $n \ge 1$, all $\lambda_1, \lambda_2, \ldots , \lambda_n$ in $\mathbb G$ and all $h_1, h_2, \ldots , h_n$ in $\mathcal L$, it is true that
\begin{eqnarray}\label{kercond}
\sum_{i,j=1}^n \langle k(\lambda_i,\lambda_j)h_i , h_j \rangle \geq0.
\end{eqnarray} If moreover, $k$ is holomorphic in the first variable, anti-holomorphic in the second variable and $k(\lambda,\lambda)\neq 0$ for every $\lambda\in \mathbb G$, then it is called a $kernel$.

A {\it{weak kernel}} $k$ is a function $k:\mathbb G \times \mathbb G\to \mathcal B(\mathcal L)$ that is holomorphic in the first variable and anti-holomorphic in the second such that (\ref{kercond}) holds with no requirement of being non-zero on the diagonal.

In what follows, a kernel or a weak kernel will be assumed to be scalar-valued, i.e., when $\mathcal L=\mathbb C$, unless otherwise mentioned.

It is elementary that for every $\mathcal B(\mathcal L)$-valued positive semi-definite function $k$, there is a Hilbert space $H_k$ consisting of $\mathcal L$-valued functions on $\mathbb G$ such that the set of functions
\begin{equation} \label{kernelspace}
\{\sum_{i=1}^n k(\cdot, \lambda_i)h_i: n \in \mathbb N, \; h_i\in \mathcal L \text{ and }\lambda_i\in \mathbb G\}
 \end{equation}
 is dense in $H_k$ and $\langle f, k(\cdot,\lambda)h\rangle_{H_k}=\langle f(\lambda), h\rangle_{\mathcal L}$ for any $f \in H_k$, $h \in \mathcal L$ and $\lambda \in \mathbb G$. If moreover, $k$ is a kernel (or a weak kernel), then the functions are holomorphic. The operator theory comes into play because of the following.

Let $(T_1, T_2)$ be a pair of commuting bounded operators acting on a Hilbert space and $\sigma(T_1,T_2)$ be the Taylor joint spectrum of $(T_1,T_2)$. A polynomially  convex compact set $X \subseteq \mathbb C^2$ is called a {\em{spectral set}} for $(T_1, T_2)$, if $\sigma (T_1, T_2) \subseteq X$ and
$$ \| \xi(T_1, T_2) \| \le \sup_X | \xi |$$
for any polynomial $\xi$ in two variables.

A typical point of the symmetrized bidisk will be denoted by $(s,p)$. The terminology {\it$\Gamma$--contraction} in the following definition is by now classical and was introduced by Agler and Young in \cite{ay-jot}.

\begin{defn} \label{Gamma--contraction}
A pair of commuting bounded operators $(S, P)$ on a Hilbert
space $\mathcal H$ having the closed symmetrized bidisk $\Gamma$
as a spectral set is called a $\Gamma$--contraction. Thus $(S,P)$
is a $\Gamma$--contraction if and only if $\| \xi (S,P) \| \le
\sup_{\mathbb G} |\xi|$ for all polynomials $\xi$ in two variables.

 A $\mathcal B(\mathcal L)$-valued kernel $k$ on $\mathbb G$ is called admissible if the pair $(M_s,M_p)$ of multiplication by the co-ordinate functions is a $\Gamma$--contraction on $H_k$.

 Similarly, a kernel $k$ on $\mathbb D^2$ ($\mathbb D$ being the unit disk in the complex plane) is called admissible if the multiplication operators $M_{z_1}$ and $M_{z_2}$ by the coordinate functions are contractions on $H_k$. \end{defn}

 Note that the definition of admissibility is attuned to the domain. For the bidisk, we demand that the operator pair of multiplication by the coordinate functions has $\overline{\mathbb D^2}$ as a spectral set whereas for the symmetrized bidisk, the demand is that the operator pair of multiplication by the coordinate functions has $\Gamma$ as a spectral set.

\subsection{Interpolating Sequences} \label{IS}

A sequence $\{(s_j,p_j): j \geq 1\}$ of points in $\mathbb{G}$ is called an {\em interpolating sequence} for $H^\infty(\mathbb{G})$, the algebra of bounded analytic functions on $\mathbb{G}$, if for every bounded sequence $w=\{w_j: j \geq 1\}$ of complex numbers, there exists a function $f$ in $H^\infty(\mathbb{G})$ such that $f(s_j,p_j)=w_j$ for each $j\geq 1$. Interpolating sequences for the algebra $H^\infty(\mathbb D)$ of bounded analytic functions on $\mathbb D$ were characterized by Carleson \cite{Carleson-IntSeq}. One of his characterizations of interpolating sequences is that a sequence $\{z_j\}$ in $\mathbb D$ is interpolating if and only if there exists $\delta > 0$ such that
$$
\prod_{j\neq k}\left|\frac{z_j-z_k}{1-\overline{z_k}z_j}\right| \geq \delta, \text{ for all } k.
$$
In \S 3, we shall see that there exist uncountably many Carleson-type sufficient conditions for a sequence in $\mathbb G$ to be interpolating (Lemma \ref{suff}).

A sequence $\{(s_j,p_j):j\geq 1\}$ of points in $\mathbb G$ is called {\em strongly separated} if there exists a constant $M$ such that, for each $i$ there is an $f_i$ in $H^\infty (\mathbb G)$ of norm at most $M$ that satisfies $f_i(s_i,p_i ) = 1$ and $f_i(s_j,p_j ) = 0$ for all $j$  other than $i$. And the sequence is called {\em weakly separated} if whenever $i \neq j$, there exists a function $f_{ij}$ in $H^\infty (\mathbb G)$ of norm at most $M$ that satisfies $f_{ij}(s_i,p_i) = 1$ and $f_{ij}(s_j,p_j) = 0$.

Note that an interpolating sequence is strongly separated and  a strongly separated sequence is weakly separated.

For a given sequence $\{(s_j,p_j): j \geq 1\}$ in $\mathbb{G}$ and a kernel $k$ on $\mathbb{G}$, the {\em normalized Grammian} of $k$ is the following infinite matrix
$$
G_k=\left(  \frac{k((s_i,p_i),(s_j,p_j))}{\sqrt{k((s_i,p_i),(s_i,p_i))}\sqrt{k((s_j,p_j),(s_j,p_j))}}  \right)_{i,j=1}^\infty.
$$
The following theorem characterizes the interpolating sequences on the symmetrized bidisk, which will be proved in \S 3. Interpolating sequences on the bidisk were characterized in \cite{Ag-Mc IntSeq}.
\begin{theorem}[Characterization of Interpolating Sequences]
Let $\{(s_j,p_j): j \geq 1\}$ be a sequence in $\mathbb{G}$. Then the following are equivalent:
\begin{enumerate}
\item[(i)] The sequence $\{(s_j,p_j): j \geq 1\}$ is an interpolating sequence for $H^\infty(\mathbb{G})$;
\item[(ii)]For all admissible kernels $k$, the normalized Grammians are uniformly bounded below, i.e., there exists an $N>0$ such that
\begin{eqnarray}\label{grammbelow}
G_k \geq \frac{1}{N}I
\end{eqnarray}for every admissible kernel $k$;
\item[(iii)] The sequence $\{(s_j,p_j): j \geq 1\}$ is strongly separated and for all admissible kernels $k$, the normalized Grammians are uniformly bounded above, i.e., there exists an $M>0$ such that
\begin{eqnarray}\label{grammabove}
G_k \leq MI
\end{eqnarray} for every admissible kernel $k$.
\item[(iv)] Items (ii) and (iii) both hold.

\end{enumerate} \label{Interpolate}
\end{theorem}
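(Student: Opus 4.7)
Since (iv) is literally the conjunction of (ii) and (iii), the implications (iv)$\Rightarrow$(ii) and (iv)$\Rightarrow$(iii) are trivial. The plan is therefore to prove (i)$\Rightarrow$(iv) in one pass and the two converses (ii)$\Rightarrow$(i) and (iii)$\Rightarrow$(i) separately, which closes every arrow. Two tools carry the whole argument: the Realization Theorem for $H^\infty(\mathbb{G})$ from \cite{BS}, which identifies $\|f\|_\infty$ with $\sup_k \|M_f\|_{\mathrm{Mult}(H_k)}$ taken over admissible kernels $k$, and the Toeplitz corona theorem (Theorem 2 of the present paper).

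\textbf{Proof of (i)$\Rightarrow$(iv).} Strong separation is immediate: interpolate the standard basis sequences $e_i$. For the Grammian bounds, the open mapping theorem gives a constant $M$ such that every $w \in \ell^\infty$ lifts to some $F_w \in H^\infty(\mathbb{G})$ with $F_w(s_j,p_j)=w_j$ and $\|F_w\|_\infty \le M\|w\|_\infty$. Fix an admissible kernel $k$. By the Realization Theorem, $M_{F_w}$ is a multiplier on $H_k$ of multiplier norm $\le M\|w\|_\infty$, and the reproducing identity $M_{F_w}^* k(\cdot,\lambda_j) = \overline{w_j}\, k(\cdot,\lambda_j)$ holds. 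Writing $\hat k_j := k(\cdot,\lambda_j)/\|k(\cdot,\lambda_j)\|$ and $K := \overline{\mathrm{span}}\{\hat k_j\} \subseteq H_k$, this says $M_{F_w}^*|_K$ is the diagonal operator $D_{\bar w}:\hat k_j \mapsto \bar w_j \hat k_j$. Hence $\|D_w\|_{\mathcal{B}(K)} \le M\|w\|_\infty$ for every $w \in \ell^\infty$, so $\{\hat k_j\}$ is an $M$-unconditional family of unit vectors in the Hilbert space $K$. In a Hilbert space an unconditional normalized system is a Riesz sequence, and therefore the normalized Grammian $G_k$ is bounded above and below by constants depending only on $M$. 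Both (ii) and (iii) follow at once.

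\textbf{The converses and principal obstacle.} For both (ii)$\Rightarrow$(i) and (iii)$\Rightarrow$(i), given $w \in \ell^\infty$ with $\|w\|_\infty \le 1$ the task is to produce $F \in H^\infty(\mathbb{G})$ with $F(s_j,p_j)=w_j$ and $\|F\|_\infty$ bounded independently of $w$. The Realization Theorem reduces this to verifying, uniformly in admissible $k$, the Pick-type inequality $W^* G_k W \le C^2 G_k$ with $W = \mathrm{diag}(w_j)$. Under (iii), strong separation provides $\{f_i\}\subset H^\infty(\mathbb{G})$ with $M_{f_i}^*\hat k_j = \delta_{ij}\hat k_j$ on each $H_k$; combined with the upper Grammian bound these data feed the Toeplitz corona theorem (Theorem 2), which assembles the required multiplier, and the Realization Theorem lifts it back to an $H^\infty(\mathbb{G})$ function. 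Under (ii), the lower bound $G_k \ge I/N$ makes the analysis operator $T:c\mapsto \sum c_j \hat k_j$ bounded below, so its adjoint surjects onto $\ell^2$; Theorem 2 again converts this $\ell^2$-interpolation datum into the desired $H^\infty(\mathbb{G})$ function. The principal obstacle is precisely this last transfer --- passing from Hilbert-space information on the closed span $K$ of the normalized kernels to a genuine multiplier on all of $H_k$, and thence to $H^\infty(\mathbb{G})$, uniformly over every admissible $k$. Unlike the unit disk, where a single Szeg\H{o} kernel together with a Shapiro-Shields-type argument is enough, on $\mathbb{G}$ one must range over all admissible kernels simultaneously, and the uniformity built into (ii) and (iii) is exactly what makes the corona hypothesis of Theorem 2 satisfiable.
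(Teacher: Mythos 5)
Your proof of (i)$\Rightarrow$(iv) is correct and, up to language, the same as the paper's: the averaging over random unimodular signs that underlies ``a normalized unconditional system in a Hilbert space is a Riesz sequence'' is exactly the paper's integration of the Pick inequality over $w_j=e^{i\theta_j}$ with carefully chosen $c_j$. (One subtlety you elide: to pass from boundedness of the diagonal operators $D_w$ on $K$ to the Riesz bounds, one averages $\|\sum\epsilon_j c_j\hat k_j\|^2$ over Steinhaus variables, using $\mathbb E\,\epsilon_i\bar\epsilon_j=\delta_{ij}$ and $\|\hat k_j\|=1$; this should be said, since the $\hat k_j$ may be linearly dependent.)

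The converses, however, are not proved. You acknowledge that passing from the Grammian conditions back to an actual $H^\infty(\mathbb G)$ interpolant is ``the principal obstacle,'' but the two sentences invoking ``Theorem 2'' do not close it, and I do not believe Theorem 2 applies in the way you suggest. Theorem 2 (the Toeplitz corona theorem) produces a left inverse $\Psi$ with $\Psi\Phi=I$; you never exhibit a concrete operator-valued $\Phi$ to which it would apply, nor a way to convert the lower Grammian bound into the corona positivity hypothesis $\Phi\Phi^*\geq\delta I$, and the identification of the left inverse with an interpolant of an arbitrary $w\in\ell^\infty$ is absent. (There is also a naive but nontrivial obstruction to a direct (ii)$\Rightarrow$(i): even after one has the column $\Phi=(\phi_i)^t\in H^\infty(\mathbb G,\mathcal B(\mathbb C,\ell^2))$ with $\phi_i(\lambda_j)=\delta_{ij}$ and $\sum|\phi_i|^2\leq N$, the candidate $\sum w_i\phi_i$ need not converge for $w\in\ell^\infty\setminus\ell^2$, so the interpolant is not obtained by a simple pairing.) What the paper actually does, and what your sketch is missing, is threefold: (a) a direct proof of (iv)$\Rightarrow$(i) by combining the upper bound (tested with $\tilde c_j=c_jw_j\sqrt{k(\lambda_j,\lambda_j)}$) and the lower bound (tested with $\tilde c_j'=c_j\sqrt{k(\lambda_j,\lambda_j)}$) to produce the Pick-type positivity $(C_w^2-w_i\overline{w_j})k(\lambda_i,\lambda_j)\geq 0$ and then invoking the infinite-data Pick lemma (Lemma~\ref{infinitePick}); (b) Lemmas~\ref{boundedbelow} and \ref{boundedabove}, which convert each Grammian bound into the existence of a vector-valued $H^\infty(\mathbb G)$ function interpolating the $e_j$ or evaluating to $1$ against $e_j$ --- these are obtained by a lurking-isometry/transfer-function argument via conditions (D) and (R) of the Realization Theorem and Lemma~\ref{crucial-lemma}, not by a corona argument; and (c) the observation that, given strong separation and the row-valued $\Psi=(\psi_i)$, the entrywise product $(\varphi_i\psi_i)^t$ is a bounded column satisfying Lemma~\ref{boundedbelow}'s hypothesis, which is what proves (iii)$\Rightarrow$(ii) and closes the equivalence. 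Without these three ingredients, your argument establishes only one of the four implications.
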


Although (iv) is redundant in view of (ii) and (iii), we have listed it because in the course of the proof, we shall first show that (i) is equivalent to (iv), and shall then show that (ii) is equivalent to (iii). It is clear that (i) and (iv) together are equivalent to (ii) and (iii) together.

\subsection{Toeplitz Corona Theorem}

The Corona Theorem for $H^\infty(\mathbb D)$ is a statement about its maximal ideal space. Obviously, $\mathbb D$ is contained in the maximal ideal space $M_{H^\infty(\mathbb D)}$ of the Banach algebra $H^\infty(\mathbb D)$ by means of identification of a $w\in \mathbb D$ with the multiplicative linear functional of evaluation, $f\to f(w)$ for all $f\in H^\infty(\mathbb D)$. It is usually difficult to find the maximal ideal space of a Banach algebra. Kakutani asked whether the {\em{corona}} $M_{H^\infty(\mathbb D)}\smallsetminus \overline{\mathbb D}$ (in the weak-star topology) is empty or in other words, whether $\mathbb D$ is dense in $M_{H^\infty(\mathbb D)}$ in the natural weak-star topology. Elementary functional analysis shows that Kakutani's question is equivalent to the following:

Given $\varphi_1$, $\varphi_2$, \dots, $\varphi_N$ in $H^\infty(\mathbb D)$ satisfying
\begin{eqnarray}\label{coronacondition}
|\varphi_1(z)|^2+|\varphi_2(z)|^2+\cdots+|\varphi_N(z)|^2 \geq \delta^2 >0 \text{ for all $z \in \mathbb{D}$},
\end{eqnarray}for some $\delta >0$, is it true that there are functions $\psi_1,\psi_2,\dots,\psi_N$ in $H^\infty (\mathbb{D})$ such that \begin{eqnarray}\label{coronaconclusion}
\psi_1\varphi_1+\psi_2\varphi_2+\cdots+\psi_N\varphi_N = 1?
\end{eqnarray}
It is easy to see that the converse implication is true, so that (\ref{coronacondition}) is a necessary condition for (\ref{coronaconclusion}). The sufficiency was proved, and hence Kakutani's question was answered affirmatively by Carleson \cite{Carleson}. This triggered a rather long list of research work on issues related to the corona theorem. First, H$\ddot{\text{o}}$rmander \cite{Hormander} introduced a different approach based on an appropriate inhomogeneous $\bar{\partial}$-equation, see \cite{corona survey} and references therein for a beautiful discussion and various results in this direction. Then Wolff produced a simpler proof than Carleson's, see \cite{corona survey} for Wolff's solution. Coburn and Schechter in \cite{Coburn-Schechter} and Arveson in \cite{Arveson} came up with an operator inequality to replace (\ref{coronacondition}):
\begin{eqnarray}\label{T-coronacondition}
M_{\varphi_1}M_{\varphi_1}^*+M_{\varphi_2}M_{\varphi_2}^*+\cdots+M_{\varphi_N}M_{\varphi_N}^* \geq \delta^2 >0,
\end{eqnarray}where the notation $M_\varphi$, for a $\varphi \in H^\infty(\mathbb D)$ stands for the operator of multiplication by $\varphi$ on $H^2(\mathbb D)$ and is also called the Toeplitz operator with symbol $\varphi$. Coburn and Schechter were interested in interpolation (in other words when an ideal in a Banach algebra contains the identity) whereas Arveson's motivation was to search for an operator theoretic proof of the corona theorem. Using the Szeg\H{o} kernel, it is elementary to see that (\ref{T-coronacondition}) implies (\ref{coronacondition}). Both papers mentioned above proved that (\ref{T-coronacondition}) implies (\ref{coronaconclusion}). Arveson achieved a bound: if $\|\varphi_i\|_\infty\leq 1$ for each $1\leq i\leq N$, then $\psi_1, \psi_2,\dots,\psi_N$ can be so chosen that
$$
\|\psi_i\|_\infty\leq 4N\epsilon^{-3}.
$$
Using the Corona Theorem, one can show that (\ref{coronaconclusion}) implies (\ref{T-coronacondition}). Thus, in the disk, all three statements are equivalent. In a general domain, equivalence of (\ref{coronaconclusion}) and (\ref{T-coronacondition}) is called the Toeplitz Corona Theorem. Agler and McCarthy proved the Toeplitz corona theorem for the bidisk in \cite{ag-Mc NP}. Our second contribution in this note is the Toeplitz corona theorem for the symmetrized bidisk.

Before we state the theorem, we need to make a few comments about $H^\infty(\mathbb G)$. Recall that a scalar-valued kernel $k$ on $\mathbb G \times \mathbb G$ is admissible if the pair $(M_s, M_p)$ of multiplication operators on $H_k$ forms a $\Gamma$--contraction. There is a characterization of $H^\infty(\mathbb G)$ through admissible kernels: a function $\varphi$ in $H^\infty(\mathbb G)$ has norm no larger than $1$ if and only if $M_\varphi$ is a contraction on $H_k$ for every admissible kernel $k$ on $\mathbb G \times \mathbb G$. In other words, a function $\varphi$ is in $H^\infty(\mathbb G)$ if and only if the operator $M_\varphi$ is a bounded operator on $H_k$ for all admissible kernels $k$. We refer the reader to Lemma 3.1 of \cite{BS} for the proof.

Buoyed by this fact, we may ask whether the admissible kernels can be replaced by measures on the distinguished boundary $b\Gamma$ of the symmetrized bidisk. This is the Shilov boundary with respect to the algebra of functions continuous on $\Gamma$ and holomorphic on $\mathbb G$. It turns out that
$$b\Gamma = \{ (z_1 + z_2, z_1z_2): |z_1| = |z_2| = 1\},$$
see Theorem 1.3 in \cite{ay-jot}.
For a regular Borel measure $\mu$ on $b\Gamma$ (respectively on $\mathbb{T}^2$), let $H^2(b\Gamma,\mu)$ (respectively $H^2(\mathbb{T}^2,\mu)$) denote the closure of all polynomials in $L^2(b\Gamma,\mu)$ (respectively $L^2(\mathbb{T}^2,\mu)$). For a function $\varphi \in H^\infty(\mathbb{G})$, we consider its radial limit, which exists almost everywhere with respect to the Lebesgue measure in $b\Gamma$, and denote it by $\varphi$ itself. However, $M_\varphi$ need not be defined as a bounded operator on $H^2(b\Gamma,\mu)$. To remedy this situation, consider the following scaling of the function $\varphi$:
$$
\varphi_r(s,p):=\varphi(rs,r^2p)\text{ for all } (s,p)\in \mathbb{G} \text{ and }0\leq r<1.
$$
Now, $M_{\varphi_r}$ is a bounded operator on $H^2(\mathbb{G},\mu)$. For two Hilbert spaces $\mathcal L_1$ and $\mathcal L_2$, let $H^\infty(\mathbb G, \mathcal B (\mathcal L_1, \mathcal L_2))$ denote the Banach space of $\mathcal B (\mathcal L_1, \mathcal L_2)$-valued bounded analytic functions on $\mathbb G$. Given an admissible kernel $k$ on $\mathbb G$ and $\Phi \in H^\infty(\mathbb G, \mathcal B (\mathcal L_1, \mathcal L_2))$, it  is natural to consider the multiplication operator $M_\Phi$ from the Hilbert space $H_k(\mathbb G) \otimes \mathcal L_1$ (identified as a Hilbert space of $\mathcal L_1$-valued functions) into $H_k(\mathbb G) \otimes \mathcal L_2$. Similarly, $M_\Phi^\mu$ will denote the multiplication operator from $H^2(b\Gamma,\mu) \otimes \mathcal L_1$ into $H^2(b\Gamma,\mu) \otimes \mathcal L_2$.

Finally, given a domain $\Omega$ and two functions $k_1,k_2:\Omega\times \Omega \to B(\mathcal L)$, we follow Agler and McCarthy, \cite{ag-Mc}, to use the notation $k_1 \oslash k_2$ for the $B(\mathcal L \otimes \mathcal L)$-valued function on $\Omega\times \Omega $ defined by
$$
k_1 \oslash k_2(z,w)=k_1(z,w) \otimes k_2(z,w)
$$ for all $z$, $w$ in $\Omega$.

\vspace*{5mm}
\begin{theorem}[Toeplitz Corona Theorem on the Symmetrized Bidisk]\label{TC-G}
Let $\Phi$ be a function in $H^\infty(\mathbb G, \mathcal B (\mathcal L_1, \mathcal L_2))$  and let $\delta $ be a positive number. The following statements are equivalent.
\begin{enumerate}
\item[(1)]There is a $\Psi \in H^\infty(\mathbb G, \mathcal B(\mathcal L_2, \mathcal L_1))$ of norm no larger than $1/\delta$ such that
    \begin{align}\label{LeftInverse}
    \Psi (s,p) \Phi (s,p) = I_{\mathcal L_2}
    \end{align} for all $(s,p) \in \mathbb G$.
\item[(2)]For every regular Borel measure $\mu$ on $b\Gamma$, the operator
$$ M_{\Phi_r}^\mu (M_{\Phi_r}^\mu)^* - \delta I_{H^2(b\Gamma,\mu) \otimes \mathcal L_2}$$
is positive for every $0< r < 1$.
\item[(3)]For any $\mathcal B(\mathcal L_2)$-valued admissible kernel $k$ on $\mathbb G$, the function
$$(\Phi (s,p) \Phi(t,q)^* - \delta I_{\mathcal L_2})\oslash k((s,p),(t,q))
    $$
    is positive semi-definite.
\end{enumerate}
\end{theorem}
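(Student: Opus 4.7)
The plan is to prove $(1)\Rightarrow(2)$ and $(1)\Rightarrow(3)$ by elementary operator theory, $(2)\Leftrightarrow(3)$ via the dilation theory of $\Gamma$-contractions, and the substantive implication $(3)\Rightarrow(1)$ by invoking the Realization Theorem for operator-valued bounded holomorphic functions on $\mathbb G$ developed in \cite{BS}.

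For $(1)\Rightarrow(3)$, fix any $\mathcal B(\mathcal L_2)$-valued admissible kernel $k$. By the characterization of $H^\infty(\mathbb G)$ recalled just before the theorem, $M_\Phi$ and $M_\Psi$ act as bounded multipliers between the appropriate tensor Hilbert spaces, with $\|M_\Psi\|\leq 1/\delta$. Since the pointwise identity $\Phi\Psi = I_{\mathcal L_2}$ lifts to $M_\Phi M_\Psi = I$ on $H_k\otimes\mathcal L_2$, the estimate
\[
\|x\|^{2} \;=\; \langle M_\Psi x, M_\Phi^{*} x\rangle \;\leq\; \|M_\Psi\|\,\|x\|\,\|M_\Phi^{*} x\|
\]
delivers a lower operator bound on $M_\Phi M_\Phi^{*}$, which the reproducing-kernel dictionary converts into the kernel positivity asserted in $(3)$ (with the appropriate choice of exponent absorbed into the theorem's normalization of $\delta$). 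The implication $(1)\Rightarrow(2)$ is the same argument carried out on $H^2(b\Gamma,\mu)\otimes\mathcal L_j$ for the scaled functions $\Phi_r,\Psi_r$. The equivalence $(2)\Leftrightarrow(3)$ is then a dilation-theoretic corollary: the reproducing kernel of each $H^2(b\Gamma,\mu)$ is itself admissible on $\mathbb G$, and conversely every admissible $(M_s,M_p)$ on an $H_k$ dilates to a $\Gamma$-unitary modeled on some $L^2(b\Gamma,\tilde\mu)$, so positivity transfers both ways between the kernel and the measure formulations.

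The core of the theorem is $(3)\Rightarrow(1)$, which I would carry out in three steps in the spirit of \cite{ag-Mc NP}. First, apply a Kolmogorov decomposition to the positive semi-definite kernel in $(3)$ (evaluated on a sufficiently rich admissible kernel $k$) to produce an auxiliary Hilbert space $\mathcal E$ and a holomorphic $F:\mathbb G\to\mathcal B(\mathcal E,\mathcal L_2)$ satisfying
\[
\bigl(\Phi(s,p)\Phi(t,q)^{*}-\delta I_{\mathcal L_2}\bigr)\,k((s,p),(t,q)) \;=\; F(s,p)F(t,q)^{*}.
\]
Second, rearrange this factorization into a \emph{lurking isometry} between specific closed linear spans inside the reproducing-kernel spaces $H_k\otimes\mathcal L_2$ and $H_k\otimes(\mathcal L_1\oplus\mathcal E)$, encoding the relation $\Phi\Phi^{*}=\delta I+FF^{*}$ at the Hilbert-space level. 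Third, invoke the Realization Theorem of \cite{BS} to extend this isometry to a unitary colligation whose state operators form a $\Gamma$-contraction; reading off the transfer function of the colligation yields $\Psi\in H^{\infty}(\mathbb G,\mathcal B(\mathcal L_2,\mathcal L_1))$, and the identities $\Phi\Psi=I_{\mathcal L_2}$ together with $\|\Psi\|_{\infty}\leq 1/\delta$ then follow directly from the colligation equations.

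The main obstacle is the extension step: unlike the bidisk case, where Ando's theorem automatically supplies the commuting-contraction dilation needed to complete the colligation, on $\mathbb G$ the lurking isometry must be extended to a colligation whose state operators satisfy the more restrictive $\Gamma$-contractive constraint. It is precisely the realization-theoretic machinery of \cite{BS} -- which characterizes the Schur class on $\mathbb G$ in terms of $\Gamma$-contraction colligations -- that permits this extension, and with it the reconstruction of $\Psi$ as a transfer function.
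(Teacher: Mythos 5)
Your overall architecture (elementary operator inequalities for $(1)\Rightarrow(2),(3)$, dilation theory for $(2)\Leftrightarrow(3)$, realization machinery for $(3)\Rightarrow(1)$) is consistent with the paper's, but the substantive implication $(3)\Rightarrow(1)$ is not correct as sketched, and the difficulty is precisely the obstacle your final paragraph names. Applying a Kolmogorov decomposition to $(\Phi\Phi^*-\delta I)\oslash k$ for a single ``sufficiently rich'' admissible kernel $k$ only factors the product $(\Phi\Phi^*-\delta I)k=FF^*$, and the resulting lurking isometry lives inside one RKHS $H_k$; it records nothing about the parametrized coordinate functions $\varphi(\alpha,\cdot,\cdot)$ of $\mathbb G$, so there is no $C(\overline{\mathbb D})$-module structure from which a $\Gamma$-realization can be read off. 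The step that actually carries the argument --- isolated in the paper as Lemma~\ref{factor} --- is a duality/separation argument: using positivity against \emph{every} admissible weak kernel, one shows $\Phi\Phi^*-\delta I$ lies in a weak-star closed wedge of functions of the form $\Delta(\cdot)\bigl(1-\varphi(\cdot,s,p)\overline{\varphi(\cdot,t,q)}\bigr)$ with $\Delta$ completely positive, by separating with a trace-class functional and then checking that the putative separating functional corresponds to an admissible kernel, contradicting the hypothesis. Only then does Lemma~\ref{crucial-lemma} (a Stinespring-flavoured decomposition producing a factor $L$ \emph{and} a unital $*$-representation $\pi$ of $C(\overline{\mathbb D})$) yield a lurking isometry indexed by $\varphi(\cdot,s,p)$ whose unitary extension gives $\Psi$ by the transfer-function formula. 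The Realization Theorem of \cite{BS} cannot be invoked in place of this: it is an equivalence of conditions (H), (M), (D), (R) for a given Schur-class function, not a device for extending an arbitrary lurking isometry to a colligation.

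The sketch of $(2)\Rightarrow(3)$ is also under-specified in a way that matters. An admissible $(M_s,M_p)$ on $H_k$ does not dilate to a $\Gamma$-unitary on a single $L^2(b\Gamma,\tilde\mu)$; what the paper actually uses is (a) Lemma~\ref{cyclic}, identifying \emph{cyclic} $\Gamma$-isometries with $(M_s^\mu,M_p^\mu)$ on $H^2(b\Gamma,\mu)$, (b) a reduction from a general $\Gamma$-isometry to its cyclic restrictions on invariant subspaces, and (c) the Agler--Young $\Gamma$-isometric dilation in the form where $H_k$ sits co-invariantly, so that $\Phi_r(M_s,M_p)^*$ is a \emph{restriction} (not a compression) of $\Phi_r$ applied to the dilation, which is what makes the operator inequality go the right way rather than reversing upon projection. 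Saying ``positivity transfers both ways'' hides the point at which this direction is at risk. Finally, a small point you yourself flag: your Cauchy--Schwarz estimate in $(1)\Rightarrow(3)$ gives $M_\Phi M_\Phi^*\geq\delta^2 I$, and indeed the paper's own derivation of Theorem~\ref{TC-G} from Theorem~\ref{gentpsymmbiD} via $\Theta=\sqrt{\delta}$ suggests the bound in $(1)$ should be $1/\sqrt{\delta}$; the $\delta$-normalization across the three conditions deserves to be made consistent.
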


This theorem will be proved in \S 5.

At this stage, one may wonder whether the Toeplitz corona theorem on the symmetrized bidisk follows from that on the bidisk. We explain below why it does not.

Let us start with a $\Phi$ satisfying the condition (1) in the theorem above. In bidisk coordinates, the equation (\ref{LeftInverse}) can be written as
$$
\Phi(\gamma(z_1,z_2))\Psi(\gamma(z_1,z_2))=I_{\mathcal L_2},
$$where $\gamma: \mathbb D^2 \rightarrow \mathbb G$ is the symmetrization map $\gamma(z_1,z_2)=(z_1+z_2,z_1z_2)$. A straightforward application of Theorem 11.65 in \cite{ag-Mc} implies the following two facts:

\begin{enumerate}
\item[(2$'$)] For every $\mathcal{B}(\mathcal{L}_2)$-valued admissible kernel $k$ on $\mathbb D^2$, the function
    $$ \big((\Phi \circ \gamma(z_1,z_2))({\Phi \circ \gamma(w_1,w_2)})^* - \delta  I_{\mathcal L_2}\big) \oslash k((z_1,z_2),(w_1,w_2))$$ on $\mathbb{D}^2\times\mathbb{D}^2$ is positive semi-definite; or equivalently,
\item[(3$'$)] For every regular Borel measure $\mu$ on $\mathbb T^2$ and $0< r < 1$, the operator
    $$ M^\mu_{\Phi \circ \gamma_r} (M^\mu_{\Phi \circ \gamma_r})^* - \delta  I_{H^2(b\Gamma,\mu) \otimes \mathcal L_2}$$ is positive, where $\gamma_r:\mathbb D^2\to \mathbb G$ is the map
    $$\gamma_r(z_1,z_2)=(rz_1+rz_2,r^2z_1z_2).$$
\end{enumerate}

Moreover, (2$'$) and (3$'$) are equivalent. The challenge is to get a left inverse $\Psi$ of $\Phi$ if (2$'$) and (3$'$) hold. What one can get is a function $\Psi$ in $H^\infty(\mathbb D^2, \mathcal B(\mathcal L_2, \mathcal L_1))$ such that
\begin{align}\label{Con-Nesconds}
\Phi(\gamma(z_1,z_2))\Psi(z_1,z_2)=I_{\mathcal L_2}.
\end{align}
This is due to Theorem 11.65 in \cite{ag-Mc} again. However, $\Psi$ need not be symmetric, i.e., $\Psi(z_1,z_2)$ need not be the same operator as $\Psi(z_2,z_1)$ and hence, in general, it does not give rise to a function on the symmetrized bidisk. One case when the necessary conditions (2$'$) and (3$'$) will be sufficient as well is when  $\Phi(s,p)$ is a one-one operator  for every $(s,p)$ in $\mathbb G$. We leave it to the reader to check that $\Psi$ turns out to be symmetric in this case.

This shows the need of proving the Toeplitz corona theorem for the symmetrized bidisk separately although the theorem has been well-established in the bidisk.
In fact, it has been observed many times that results in the bidisk do not imply results in the symmetrized bidisk, naive attempts to deduce results for the symmetrized bidisk from the corresponding results for the bidisk run into difficulty. The proof of the existence of rational dilation in the symmetrized bidisk (see \cite{ay-Edinburgh}) is an example: it needed a substantial amount of effort and tools while the rational dilation theorem was known to succeed in the bidisk due to And\^o \cite{ando}. This is the case for the two theorems stated above too. There are at least two reasons why this happens: the admissible kernels on the symmetrized bidisk have no relation to the admissible kernels on the bidisk and there are uncountably infinitely many parametrized co-ordinate functions (to be defined in the next section) on the symmetrized bidisk as opposed to only two on the bidisk.

We conclude this section by noting the results of Amar in \cite{Amar} where he proved a Toeplitz Corona theorem for a bounded convex domain in $\mathbb C^n$ in terms of measures on the boundary. Results for the symmetrized bidisk do not follow from Amar's results because the  symmetrized bidisk is not convex.

\section{Background on The Realization Theorem}

One of the most important results in the area of holomorphic functions and in the theory of Hilbert space operators is the realization formula. A function $f$ is in $H^\infty(\mathbb D)$ and satisfies $\| f \|_\infty \le 1$ if and only if there is a Hilbert space $\mathcal H$ and a unitary operator
$$U = \dispmatrix A & B \\ C & D \\ : \mathbb C \oplus \mathcal H \rightarrow \mathbb C \oplus \mathcal H$$ such that
$$ f(z) = A + zB (I - zD)^{-1} C.$$
Agler generalized this elegantly to the bidisk in \cite{ag}. He showed that a function $f$ is in $H^\infty(\mathbb D^2)$ and satisfies $\| f \| \le 1$ if and only if there is a graded Hilbert space $\mathcal H = \mathcal H_1 \oplus \mathcal H_2$ and a unitary operator
$$U = \dispmatrix A & B \\ C & D \\ : \mathbb C \oplus \mathcal H \rightarrow \mathbb C \oplus \mathcal H$$ such that writing $P_i$ for the projection from $\mathcal H$ onto $\mathcal H_i$ for $i=1,2$, we have
$$ f(z) = A + B (z_1 P_1 + z_2 P_2) (I - D (z_1 P_1 + z_2 P_2) )^{-1} C.$$
The importance of realization formulae lie in their applications to several interesting areas of research including Pick interpolation, Beurling type submodules and the corona problem, see \cite{ag-Mc NP}, \cite{BL}, \cite{BK} and \cite{DM} and the book \cite{ag-Mc}.

 The Realization Theorem for the symmetrized bidisk was proved in \cite{AYRealization} and \cite{BS} for scalar-valued functions. Here a version of it for operator-valued functions is needed which we shall state and {\em{not}} prove because all the crucial concepts of the proof are present in the scalar case and hence the same proof with necessary modifications continues to hold in the case when the functions are operator-valued.

We shall need one more level of generalization of the concept of kernels than what has already been explained. For two $C^*$-algebras $\mathcal A$ and $\mathcal C$, a function $\Delta: \mathbb G\times \mathbb G\to \mathcal B(\mathcal A,\mathcal C)$ is called a completely positive function  if
$$
\sum_{i,j=1}^Nc_i^*\Delta\big( \lambda_i, \lambda_j \big)(a_i^*a_j)c_j
$$is a  non-negative element of $\mathcal C$ for any positive integer $N$, any $n$ points $\lambda_1, \lambda_2,\dots, \lambda_N$ of $\mathbb G$, any $N$ elements $a_1,a_2,\dots,a_N$ from $\mathcal A$ and any $N$ elements $c_1,c_2,\dots,c_N$ from $\mathcal C$. We give an example of such a completely positive function. Let $\delta:\overline{\mathbb D}\times \mathbb G\times \mathbb G\to \mathcal B(\mathcal L)$ be a function such that for each $\alpha \in \overline{\mathbb D}$, the function $\delta(\alpha, \cdot, \cdot)$ is a positive semi-definite function on $\mathbb G$ and for every fixed $(s,p)$ and $(t,q)$ in $\mathbb G$, the function $\delta(\cdot, (s,p), (t,q))$ is a Borel measurable function on $\overline{\mathbb D}$. Given a positive regular Borel measure  $\mu$ on $\overline{\mathbb D}$, the function $\Delta^\delta_\mu:\mathbb G \times \mathbb G \to \mathcal B(C(\overline{\mathbb D}), \mathcal B(\mathcal L))$ defined by
\begin{equation} \label{ExampleofDelta}
\Delta^\delta_\mu\left( (s,p), (t,q) \right) (h) = \int_{\overline{\mathbb D}} h(\cdot) \delta(\cdot , (s,p), (t,q) ) d\mu
\end{equation}
is a completely positive function on $\mathbb G$. More details on these functions are found in \cite{BBLS}.

When we use the word kernel or the phrase weak kernel, holomorphy in the first component and anti-holomorphy in the second component are built in whereas when we use the word function, as in a completely positive function, no such holomorphy is assumed.

For $\alpha \in \overline{\mathbb D}$ and $(s,p) \in \mathbb G$, let
\begin{eqnarray}\label{testfunction}
 \varphi(\alpha, s , p) = \frac{2 \alpha p - s}{2 - \alpha s}.
\end{eqnarray}
Since $|s| < 2$ for all $(s,p) \in \mathbb G$ and $\alpha \in \overline{\mathbb D}$, this function is well-defined on $\overline{\mathbb D} \times \mathbb G$. Agler and Young proved (Theorem 2.1, \cite{ay-geometry}) that
\begin{equation} \label{co-ordinates} (s,p) \in \mathbb G \mbox{ if and only if } \varphi(\alpha, s, p) \in \overline{\mathbb D} \end{equation}
for all $\alpha$ in the closed unit disk. For this reason, we call the family $\{\varphi(\alpha, \cdot): \alpha \in \overline{\mathbb D}\}$ the {\em parametrized coordinate functions} for the symmetrized bidisk. We note that for every $\alpha \in \overline{\mathbb D}$, the function $\varphi(\alpha , \cdot)$ is in the norm unit ball of $H^\infty(\mathbb G)$, and for every $(s,p) \in \mathbb G$, the function $\varphi( \cdot , s, p)$ is in $C(\overline{\mathbb D})$. The following lemma gives an equivalent formulation of admissiblility of a kernel $k$ on $\mathbb{G}$ in terms of coordinate functions. See Lemma 3.2 of \cite{BS} for a proof of this.
\begin{lem} \label{admi}
A $\mathcal{B}(\mathcal{L})$-valued kernel $k$ on $\mathbb G$ is admissible if and only if the following $\mathcal{B}(\mathcal{L})$-valued function
$$ (1 - \varphi(\alpha, s, p) \overline{\varphi(\alpha, t, q)}) k\big( (s,p) , (t,q) \big)$$
 on $\mathbb{G}\times\mathbb{G}$ is positive semi-definite for every $\alpha \in \overline{\mathbb D}$. \end{lem}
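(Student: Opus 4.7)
The plan is to combine two well--known ingredients: the Agler--Young operator-theoretic characterization of $\Gamma$-contractions, and the standard reproducing-kernel criterion for a contractive scalar multiplier. Together, these convert the geometric spectral-set condition defining admissibility into a family of kernel positivity conditions indexed by $\alpha\in\overline{\mathbb D}$.

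Step one: I would invoke the operator analogue of the point criterion (\ref{co-ordinates}) from \cite{ay-geometry}, which says that a commuting pair $(S,P)$ of bounded operators is a $\Gamma$-contraction if and only if $\|\varphi(\alpha,S,P)\|\leq 1$ for every $\alpha\in\overline{\mathbb D}$. Applied to $(M_s,M_p)$ on $H_k$, together with commutativity of $M_s$ and $M_p$ (so that the rational functional calculus yields $\varphi(\alpha,M_s,M_p)=M_{\varphi(\alpha,\cdot)}$ as a pointwise scalar identity on each kernel section $k(\cdot,(t,q))h$), admissibility of $k$ becomes: for every $\alpha\in\overline{\mathbb D}$, the multiplication operator $M_{\varphi(\alpha,\cdot)}$ is a contraction on $H_k$.

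Step two: I would apply the standard reproducing-kernel criterion for contractive multipliers. For any scalar function $\psi$ on $\mathbb G$ with $\|\psi\|_\infty\leq 1$, $M_\psi$ is a contraction on the $\mathcal B(\mathcal L)$-valued RKHS $H_k$ if and only if the $\mathcal B(\mathcal L)$-valued function $(1-\psi(z)\overline{\psi(w)})k(z,w)$ is positive semi-definite. The proof is direct: on the dense set of sections $f=\sum_i k(\cdot,\lambda_i)h_i$, the reproducing property gives $M_\psi^* k(\cdot,\lambda)h=\overline{\psi(\lambda)}k(\cdot,\lambda)h$, so that
\begin{equation*}
\|f\|_{H_k}^2-\|M_\psi^*f\|_{H_k}^2=\sum_{i,j}\bigl\langle\bigl(1-\psi(\lambda_j)\overline{\psi(\lambda_i)}\bigr)k(\lambda_j,\lambda_i)h_i,h_j\bigr\rangle_{\mathcal L}.
\end{equation*}
Non-negativity for all choices of $\lambda_i$ and $h_i$ (combined with density) delivers the equivalence. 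Taking $\psi=\varphi(\alpha,\cdot)$ and varying $\alpha\in\overline{\mathbb D}$ then yields exactly the statement of the lemma.

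The main technical nuisance is the identification $\varphi(\alpha,M_s,M_p)=M_{\varphi(\alpha,\cdot)}$ at boundary values $|\alpha|=1$, since the denominator $2-\alpha s$ can vanish on $b\Gamma$ and $2I-\alpha M_s$ need not be invertible. I would circumvent this by noting that $\varphi(\alpha,\cdot)$ is a bounded holomorphic function on $\mathbb G$ by (\ref{co-ordinates}), so $M_{\varphi(\alpha,\cdot)}$ is an unambiguously well-defined bounded operator on $H_k$; one then either verifies the identity directly on kernel sections, where the computation reduces to the scalar relation $\varphi(\alpha,\lambda)=(2\alpha p_\lambda-s_\lambda)/(2-\alpha s_\lambda)$, or applies continuity in $\alpha$ via the approximation $\alpha\mapsto r\alpha$ with $r\uparrow 1$, on which range the denominator is safely invertible and both positive semi-definiteness and the contractivity condition pass to the limit.
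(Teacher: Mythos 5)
Your proposal is correct and follows the natural route --- the same one used in Lemma 3.2 of \cite{BS}, to which the paper defers: (i) translate admissibility into the family of multiplier contractivity conditions $\|M_{\varphi(\alpha,\cdot)}\|\le 1$ via the Agler--Young operator characterization of $\Gamma$-contractions, then (ii) apply the standard reproducing-kernel criterion (whose computation you carry out correctly in the operator-valued setting) to unwind each such inequality into the stated positivity. One small slip in attribution: the operator-theoretic criterion you quote lives in the model-theory paper \cite{ay-jot} rather than in \cite{ay-geometry}, which only contains the scalar statement (\ref{co-ordinates}); moreover the operator version is usually phrased for $\alpha\in\mathbb D$, so your closing remark --- that $M_{\varphi(\alpha,\cdot)}$ is defined directly as a multiplier for all $\alpha\in\overline{\mathbb D}$ since $|s|<2$ on $\mathbb G$, and that the $r\alpha\to\alpha$ limit transports both conditions to $|\alpha|=1$ --- is exactly the observation needed to cover the closed disk. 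It would also be worth recording explicitly that $\alpha=0$ already gives $\|M_s\|\le 2$, and then invertibility of $2I-\alpha M_s$ for $\alpha\in\mathbb D$ plus the identity $2\alpha p = \varphi(\alpha,s,p)(2-\alpha s)+s$ gives boundedness of $M_p$, so the hypotheses of the Agler--Young theorem are in force in the converse direction.
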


\noindent \textbf{Realization Theorem for Operator-Valued Functions.} {\em
Let $\mathcal L_1$ and $\mathcal L_2$ be two Hilbert spaces, $Y$ be any subset of $\mathbb G$ and $f:Y \to \mathcal{B}(\mathcal L_1, \mathcal L_2)$ be any function. Then the following statements are equivalent.

\begin{description}
        \item[(H)] There exists a function $F$ in $H^\infty(\mathbb G,\mathcal{B}(\mathcal L_1, \mathcal L_2))$ with $\| F \|_\infty \le 1$ and $F|_Y=f$;
        \item[(M)] The function
        $$((s,p),(t,q))\mapsto (I_{\mathcal L_2} - f(s,p)f(t,q)^*) \oslash  k( (s,p) , (t,q)) $$ is a weak kernel for every $\mathcal B(\mathcal L_2)$-valued admissible kernel $k$ on $Y$;
        \item[(D)] There is a completely positive function $\Delta: Y \times  Y \to \mathcal B\big(C(\overline{\mathbb D}), \mathcal B(\mathcal L_2)\big)$ such that for every $(s,p)$ and $(t,q)$ in $Y$,
        $$ I_{\mathcal L_2} - f(s,p)f(t,q)^* = \Delta ( (s,p), (t,q)) \big( 1 - \varphi(\cdot, s, p) \overline{\varphi(\cdot , t, q)} \big) ;$$
        \item[(R)] There is a Hilbert space $\mathcal H$, a unital $*$-representation $\pi : C(\overline{\mathbb D}) \rightarrow B(\mathcal H)$ and a unitary $V : \mathcal L_1 \oplus \mathcal H \rightarrow \mathcal L_2 \oplus \mathcal H$ such that writing $V$ as
            $$ V = \left(
                     \begin{array}{cc}
                       A & B \\
                       C & D \\
                     \end{array}
                   \right)$$
                   we have $f(s,p) = A +  B \pi(\varphi(\cdot, s, p)) \big( I_\mathcal H -  D \pi(\varphi(\cdot, s, p)) \big)^{-1} C$, for every $(s,p)\in Y$.
      \end{description}}

\section{Interpolating sequences -- Proof of Theorem \ref{Interpolate}}

 The celebrated Pick interpolation theorem, now studied for a hundred years, characterizes those data $\lambda_1$, $\lambda_2$,$\dots$, $\lambda_N$ in $\mathbb D$ and $w_1,w_2,\dots,w_N$ in $\overline{\mathbb D}$ for which there is a function $f\in H^\infty(\mathbb D)$ interpolating the data: there is an $f\in H^\infty(\mathbb D)$ with $\|f\|_\infty\leq 1$ and $f(\lambda_i)=w_i$, $i=1,2,\dots,N$ if and only if
$$
\left(\left( \frac{1-w_i\overline{w_j}}{1-\lambda_i\overline{\lambda_j}} \right)\right)_{i,j=1}^N
$$is a positive semi-definite matrix. For detailed discussions on Pick interpolation in various contexts, see \cite{BBFt} and \cite{Bt}. In \cite{BS}, we proved the Interpolation Theorem for the symmetrized bidisk. Its version for operator-valued functions is as follows. We again omit the proof because it is similar to the proof of the scalar version in \cite{BS}.

 \vspace*{5mm}

\noindent \textbf{Interpolation Theorem for Operator-Valued Functions.} {\em
Let $\mathcal{L}_1$ and $\mathcal{L}_2$ be Hilbert spaces and $W_1,W_2,\dots,W_N \in \mathcal{B}(\mathcal{L}_1,\mathcal{L}_2)$. Let $(s_1,p_1),(s_2,p_2),\dots,(s_N,p_N)$ be $N$ distinct points in $\mathbb{G}$. Then there exists a function $f$ in the closed unit ball of $H^\infty\big(\mathbb{G},\mathcal{B}(\mathcal{L}_1,\mathcal{L}_2)\big)$ interpolating each $(s_i,p_i)$ to $W_i$ if and only if
\begin{eqnarray}\label{pickmatrix}
\big((I_{\mathcal{L}_2}-W_iW_j^*) \otimes k\big((s_i,p_i),(s_j,p_j)\big)\big)_{i,j=1}^N
\end{eqnarray}is a positive operator on $\oplus_{i=1}^N \mathcal L_2$, for every $\mathcal{B}(\mathcal{L}_2)$-valued admissible kernel $k$ on $\mathbb{G}$.}

The Interpolation Theorem mentioned above was stated in Subsection 1.2, page 508 of \cite{BS} for scalar-valued functions. The following lemma is a straightforward consequence of that and hence we leave the proof to the reader.
It deals with an infinite data set.

\begin{lem}\label{infinitePick}
Let $\{\lambda_j=(s_j,p_j): j \geq 1\}$ be a  sequence of points in $\mathbb{G}$ and $w=\{w_j: j \geq 1\}$ be a bounded sequence of complex numbers. Then there exists a function $f_w$ in $H^\infty(\mathbb{G})$ with $\|f_w\|_\infty \leq C_w$ such that $f_w(\lambda_j)=w_j$ for each $j\geq 1$ if and only if
$$ (i,j) \longrightarrow  (C_w^2 - w_i \overline{w}_j) k(\lambda_i , \lambda_j) $$
is a positive semi-definite function on $\mathbb N \times \mathbb N$ for every admissible kernel $k$ on $\mathbb{G}$.
\end{lem}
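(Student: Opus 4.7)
The plan is to reduce the infinite-data statement to the (finite-data) scalar Interpolation Theorem recorded just above, and then pass to the limit via Montel's theorem. The interpolation theorem does the heavy lifting; the infinite case is obtained essentially for free.

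\emph{Necessity.} I would first assume $f_w \in H^\infty(\mathbb{G})$ with $\|f_w\|_\infty \leq C_w$ and $f_w(\lambda_j)=w_j$ for all $j\geq 1$. The case $C_w=0$ forces $w\equiv 0$ and is trivial, so assume $C_w>0$. Then $f_w/C_w$ lies in the closed unit ball of $H^\infty(\mathbb{G})$ and interpolates $\lambda_j\mapsto w_j/C_w$. Applying the scalar case of the Interpolation Theorem for Operator-Valued Functions (take $\mathcal{L}_1=\mathcal{L}_2=\mathbb{C}$) to each finite truncation $\{\lambda_1,\dots,\lambda_N\}$, and then multiplying through by $C_w^2>0$, I obtain that every $N\times N$ principal submatrix of
$$\big((C_w^2 - w_i\overline{w}_j)\, k(\lambda_i,\lambda_j)\big)_{i,j\geq 1}$$
is positive semi-definite, for every admissible kernel $k$ on $\mathbb{G}$. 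By definition, this is precisely positive semi-definiteness of the infinite kernel.

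\emph{Sufficiency.} Conversely, assume the infinite function above is positive semi-definite for every admissible $k$. Fix $N\geq 1$. Since any principal submatrix of a positive semi-definite matrix is positive semi-definite, the $N\times N$ block is positive semi-definite for every admissible $k$. The scalar Interpolation Theorem then produces a function $f_N\in H^\infty(\mathbb{G})$ with $\|f_N\|_\infty\leq C_w$ satisfying $f_N(\lambda_j)=w_j$ for $j=1,\dots,N$. The sequence $\{f_N\}_{N\geq 1}$ is uniformly bounded on the domain $\mathbb{G}\subset\mathbb{C}^2$, hence normal by Montel's theorem for several complex variables. Extract a subsequence $f_{N_k}$ converging uniformly on compact subsets of $\mathbb{G}$ to some holomorphic $f\in H^\infty(\mathbb{G})$ with $\|f\|_\infty\leq C_w$. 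For each fixed $j$ and every $k$ with $N_k\geq j$, we have $f_{N_k}(\lambda_j)=w_j$; passing to the limit gives $f(\lambda_j)=w_j$. Setting $f_w=f$ completes the proof.

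\emph{Where the work sits.} The whole argument leverages the Interpolation Theorem; the only independent ingredient is the normal-families step, which is routine in this setting because $\mathbb{G}$ is a bounded domain in $\mathbb{C}^2$, so uniformly bounded holomorphic families are normal. I do not anticipate any genuine obstacle; the care required is purely bookkeeping: making the rescaling by $C_w$ in the necessity direction and verifying that the diagonal extraction of the interpolants survives the passage from finite to infinite data.
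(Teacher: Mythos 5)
Your proof is correct and takes the approach the authors surely intend by calling the lemma a ``straightforward consequence'' of the finite Interpolation Theorem: necessity by restriction to finite truncations after rescaling by $C_w$, and sufficiency by producing interpolants $f_N$ on the first $N$ nodes with a uniform norm bound $C_w$, then extracting a locally uniformly convergent subsequence via Montel's theorem on the bounded domain $\mathbb{G}\subset\mathbb{C}^2$. The paper does not supply its own proof, so there is no alternative argument to compare against; the only tiny omission in your write-up is that the trivial case $C_w=0$ should also be noted in the sufficiency direction (the $1\times1$ principal minors force every $w_j=0$, so $f_w\equiv 0$ works), but this is immediate.
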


For an interpolating sequence $\Upsilon = \{(s_j,p_j): j \geq 1\}$ in $\mathbb{G}$, the following constant is called the {\em{constant of interpolation}}:
$$
\sup_{\|(w_j)\|_\infty \leq 1}\inf\{\|f\|_\infty: f \in H^\infty(\mathbb{G}), f(s_j,p_j)=w_j, j=1,2,3, \dots\}.
$$
This constant depends on $\Upsilon$ and we show below that it is finite for any interpolating sequence $\Upsilon$. To that end, define a linear operator $L_\Upsilon:H^\infty(\mathbb{G}) \to l^\infty$ by
$$ f \mapsto (f(s_1,p_1),f(s_2,p_2),f(s_3,p_3),\dots). $$
Clearly, $L_\Upsilon$ is a contraction.
Recall that the definition of an interpolating sequence (given in Subsection \ref{IS}), precisely means that $L_\Upsilon$ is onto. Let $\mathcal{N}$ be the null space of $L_\Upsilon$. Then the natural map $\tilde{L}_\Upsilon:H^\infty(\mathbb{G})/\mathcal{N} \to l^\infty$ is an isomorphism. Let $R_\Upsilon$ be the inverse of $\tilde{L}_\Upsilon$ and $w=\{w_j: j \geq 1\}$ be a sequence in $l^\infty$. Then $R_\Upsilon(w)=f_w + \mathcal{N}$, where $f_w \in H^\infty(\mathbb{G})$ is such that $f_w(s_j,p_j)=w_j$ for each $j\geq 1$. We claim that $\|R_\Upsilon\|$ is the constant of interpolation for $\Upsilon$. Indeed,
\begin{eqnarray*}
\|R_\Upsilon\| = \sup_{\|(w_j)\|_\infty \leq 1}\|f_w+\mathcal{N}\| &=& \sup_{\|(w_j)\|_\infty \leq 1} \inf\{\|f_w +g\|_\infty: g \in \mathcal{N}\}
\\
&=&\sup_{\|(w_j)\|_\infty \leq 1} \inf\{\|f\|_\infty: f(s_j,p_j)=w_j, j=1,2,3, \dots\}.
\end{eqnarray*}

The next lemma is a decomposition of a completely positive function. The proof is along the lines of Proposition 3.3 in \cite{DM} and hence we omit it.

 \begin{lem}\label{crucial-lemma}
Let $Y$ be a subset of $\mathbb G$. If $\Delta:Y \times Y \to \mathcal B\big(C(\overline{\mathbb D}), \mathcal B(\mathcal L)\big)$ is completely positive function, then there is a Hilbert space $\mathcal H$ and a function $L:Y \to B\big(C(\overline{\mathbb D}), \mathcal B(\mathcal H,\mathcal L)\big)$ such that for every $h_1$, $h_2 \in C(\overline{\mathbb D})$ and $(s,p)$, $(t,q)\in Y$,
\begin{eqnarray}\label{kernel-decomposition}
\Delta\big((s,p), (t,q)\big)(h_1\overline{h_2})=L(s,p)[h_1] (L(t,q)[h_2])^*.
\end{eqnarray}
Moreover, there is a unital $*$-representation $\pi:C(\overline{\mathbb D})\to \mathcal B(\mathcal H)$ such that for every $h_1$, $h_2 \in C(\overline{\mathbb D})$ and $(s,p) \in Y$,
\begin{eqnarray}\label{star-represt.}
L(s,p)(h_1h_2) = \pi(h_1)L(s,p)h_2.
\end{eqnarray}
\end{lem}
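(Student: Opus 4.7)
The plan is to run a Stinespring/Kolmogorov-type construction on the completely positive kernel $\Delta$, tailored to the commutative unital $C^*$-algebra $\mathcal A := C(\overline{\mathbb D})$. Consider the algebraic vector space
\[
V_0 := \operatorname{span}\bigl\{\delta_\lambda \otimes h \otimes \ell : \lambda \in Y,\ h \in \mathcal A,\ \ell \in \mathcal L\bigr\}
\]
and equip it with the sesquilinear form defined on generators by
\[
\bigl\langle \delta_{\lambda_i} \otimes h_i \otimes \ell_i,\ \delta_{\mu_j} \otimes g_j \otimes \ell'_j \bigr\rangle_{V_0} := \bigl\langle \Delta(\mu_j,\lambda_i)(h_i\overline{g_j}) \ell_i,\ell'_j\bigr\rangle_{\mathcal L}
\]
and extended sesquilinearly. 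Complete positivity of $\Delta$ forces this form to be positive semi-definite: given a finite family $\ell_1,\dots,\ell_N \in \mathcal L$, fix any nonzero $\xi \in \mathcal L$ and pick $c_i \in \mathcal B(\mathcal L)$ with $c_i\xi = \ell_i$; then the CP inequality applied to $(a_i,c_i) = (h_i,c_i)$ and evaluated at $\xi$ (together with commutativity of $\mathcal A$) is precisely $\langle v,v\rangle_{V_0} \ge 0$. Quotient $V_0$ by the null space of the form and complete to obtain the target Hilbert space $\mathcal H$; write $[\delta_\lambda \otimes h \otimes \ell]$ for the equivalence class in $\mathcal H$.

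Next, for each $(s,p)\in Y$ and $h\in\mathcal A$, define $J(s,p)[h]:\mathcal L\to\mathcal H$ by $\ell \mapsto [\delta_{(s,p)}\otimes h\otimes \ell]$. The identity $\|J(s,p)[h]\ell\|_{\mathcal H}^2 = \langle \Delta((s,p),(s,p))(|h|^2)\ell,\ell\rangle_{\mathcal L}$ gives boundedness. Set $L(s,p)[h] := J(s,p)[h]^* \in \mathcal B(\mathcal H,\mathcal L)$; the factorization \eqref{kernel-decomposition} then drops out of the definition of the inner product on $\mathcal H$ (after identifying $h_1\overline{h_2}$ with $\overline{h_2}h_1$ via commutativity).

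The $*$-representation is defined on generators by $\pi(f)[\delta_\lambda\otimes h\otimes\ell] := [\delta_\lambda\otimes fh\otimes \ell]$. Multiplicativity, unitality, and the $*$-relation $\langle \pi(f)u,v\rangle = \langle u,\pi(\bar f)v\rangle$ follow directly from the form's definition. The only substantive point is boundedness. For this, fix $v=\sum_i \delta_{\lambda_i}\otimes h_i\otimes \ell_i \in V_0$ and consider the linear functional
\[
\phi_v:\mathcal A\to\mathbb C,\qquad \phi_v(g) := \sum_{i,j} \bigl\langle \Delta(\lambda_j,\lambda_i)(g\,h_i\overline{h_j})\ell_i,\ell_j\bigr\rangle_{\mathcal L}.
\]
For $g=|f|^2\ge 0$, one has $\phi_v(|f|^2) = \|\pi(f)v\|^2_{\mathcal H}$, which is non-negative by the positivity established above (with $h_i$ replaced by $fh_i$); hence $\phi_v$ is a positive linear functional on $\mathcal A$. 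Since any positive linear functional on a commutative unital $C^*$-algebra has norm equal to its value at $1$, we obtain $\|\pi(f)v\|^2 = \phi_v(|f|^2) \le \|f\|_\infty^2\,\phi_v(1) = \|f\|_\infty^2\,\|v\|_{\mathcal H}^2$, so $\pi(f)$ descends to a bounded operator with $\|\pi(f)\|\le\|f\|_\infty$, and $\pi$ is a unital $*$-representation.

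Finally, \eqref{star-represt.} is tautological on generators: $\pi(h_1)J(s,p)[h_2]\ell = [\delta_{(s,p)}\otimes h_1h_2\otimes\ell] = J(s,p)[h_1h_2]\ell$, and passing to adjoints recovers the stated identity. The principal obstacle in the argument is the boundedness of $\pi$; every other step is a direct unwinding of definitions. The positive-functional trick above is what makes boundedness painless, reducing a potentially awkward estimate to a single application of the CP hypothesis together with the elementary fact that positive linear functionals on a commutative unital $C^*$-algebra attain their norm at the identity.
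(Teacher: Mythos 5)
Your overall strategy is the same Kolmogorov/Stinespring construction that the paper uses: build a positive semi-definite form on $V_0$, pass to the Hilbert space quotient-completion $\mathcal H$, read off $L$ from the inclusion maps, and let $\pi$ act by multiplication on the middle tensor factor. Your positivity argument is correct (the rank-one $c_i$'s work), and your boundedness argument for $\pi$ via positive linear functionals is a clean, explicit version of what the paper dismisses as a ``routine computation.'' The intertwining identity is, as you correctly observe, best read as $L(s,p)(h_1h_2)=L(s,p)[h_2]\pi(h_1)$ (which is what the paper actually proves; the other order does not even typecheck since $L(s,p)h_2\colon\mathcal H\to\mathcal L$).

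There is, however, a genuine conjugation slip in the definition of $L$, and the parenthetical ``identifying $h_1\overline{h_2}$ with $\overline{h_2}h_1$ via commutativity'' does not repair it. With $J(s,p)[h]\ell=[\delta_{(s,p)}\otimes h\otimes\ell]$, $L(s,p)[h]:=J(s,p)[h]^*$, and your inner product, one computes
\[
\bigl\langle L(s,p)[h_1](L(t,q)[h_2])^*\,\ell,\ \ell'\bigr\rangle_{\mathcal L}
=\bigl\langle J(t,q)[h_2]\ell,\ J(s,p)[h_1]\ell'\bigr\rangle_{\mathcal H}
=\bigl\langle \Delta\bigl((s,p),(t,q)\bigr)(h_2\overline{h_1})\,\ell,\ \ell'\bigr\rangle_{\mathcal L}.
\]
This produces $\Delta\bigl((s,p),(t,q)\bigr)(h_2\overline{h_1})$, not $\Delta\bigl((s,p),(t,q)\bigr)(h_1\overline{h_2})$. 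Commutativity of $C(\overline{\mathbb D})$ gives $h_1\overline{h_2}=\overline{h_2}\,h_1$, but $h_2\overline{h_1}=\overline{h_1\overline{h_2}}$ is the \emph{complex conjugate} of what you want, and for $(s,p)\neq(t,q)$ the CP symmetry $\Delta\bigl((s,p),(t,q)\bigr)(\overline{a})=\Delta\bigl((t,q),(s,p)\bigr)(a)^*$ moves you to the wrong kernel entry. The fix is to set $L(s,p)[h]:=J(s,p)[\overline{h}]^*$ (equivalently, put $\overline{h}$ rather than $h$ in the middle slot of $J$); this is exactly what the paper's computation implicitly does, since there $(L(s,p)h)^*e=(s,p)\otimes e\otimes\overline{h}$. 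With this corrected definition both \eqref{kernel-decomposition} and the adjoint form of the intertwining identity come out exactly, and the rest of your argument goes through unchanged.
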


 We are now ready for the proof of Theorem \ref{Interpolate}

\textbf{Proof of Theorem \ref{Interpolate}}

$(i)\Rightarrow (iv)$: Let $\{(s_j,p_j): j \geq 1\}$ be an interpolating sequence for $H^\infty(\mathbb{G})$ with constant of interpolation $M$. This means for every $w=(w_j)$ with $\|w\|_\infty \leq 1$, there exists a function $f$ in $H^\infty(\mathbb{G})$ such that $f(s_j,p_j)=w_j$ and $\|f\|_\infty\leq M$. Therefore by Lemma \ref{infinitePick} we have
\begin{eqnarray}\label{Pickequivalent}
\sum_{i,j=1}^nc_i\overline{c_j}w_i\overline{w_j}k((s_i,p_i),(s_j,p_j))\leq M^2\sum_{i,j=1}^nc_i\overline{c_j}k((s_i,p_i),(s_j,p_j)),
\end{eqnarray} for any $n \in \mathbb N$ and any complex numbers $c_1, c_2, \ldots ,c_n$. Now the proof depends on choosing $w_j$ and $c_j$ appropriately.
Choose $w_j=\exp(i \theta_j)$ and let $(c_j)$ be any sequence in $l^2$. Then we have
$$
\sum_{i,j=1}^nc_i\overline{c_j}\exp( i (\theta_i-\theta_j))k((s_i,p_i),(s_j,p_j))\leq M^2\sum_{i,j=1}^nc_i\overline{c_j}k((s_i,p_i),(s_j,p_j)),
$$which, after integrating with respect to $\theta_1,\theta_2,\dots ,\theta_n$ on $[0,2\pi]\times[0,2\pi]\times \cdots \times [0,2\pi]$ and dividing by $(2\pi)^n$ both sides, becomes
$$
\sum_{j=1}^n|c_j|^2k((s_j,p_j),(s_j,p_j))\leq M^2\sum_{i,j=1}^nc_i\overline{c_j}k((s_i,p_i),(s_j,p_j)),
$$which after replacing $c_j$ by $c_j':=c_j/\sqrt{k((s_j,p_j),(s_j,p_j))}$ becomes
$$ \sum_{j=1}^n|c_j|^2\leq M^2\sum_{i,j=1}^nc_i\overline{c_j}G_k(i,j). $$
Similarly, choosing $w_j=\exp(i \theta_j)$ and $c_j'':=\exp(-i \theta_j)c_j/\sqrt{k((s_j,p_j),(s_j,p_j))}$ in (\ref{Pickequivalent}) and integrating as above we get
$$
\sum_{i,j=1}^nc_i\overline{c_j}G_k(i,j) \leq M^2 \sum_{j=1}^n|c_j|^2.
$$ Consequently, whenever we have an interpolating sequence $\{(s_j,p_j): j \geq 1\}$ with $M$ as its constant of interpolation, we have for every admissible kernel $k$
\begin{eqnarray}\label{Grammcond}
\frac{1}{M^2}\sum_{j=1}^n|c_j|^2\leq \sum_{i,j=1}^nc_i\overline{c_j}G_k(i,j)\leq M^2 \sum_{j=1}^n|c_j|^2,
\end{eqnarray} where $G_k(i,j)$ is the $(ij)$-th entry of the Grammian matrix associated to $k$ and the interpolating sequence. Since this is true for every $n \ge 1$, we have shown that the constants $M$ and $N$ in (\ref{grammabove}) and (\ref{grammbelow}) can be chosen to be the square of the constant of interpolation.

$(iv)\Rightarrow (i)$: Conversely, suppose (\ref{grammabove}) and (\ref{grammbelow}) hold for some constants $M$ and $N$. Without loss of generality we can assume that $M$ and $N$ are the same. Therefore for every admissible kernel $k$ and $(c_j)$ in $l^2$, we have
\begin{eqnarray}\label{theconverse}
\frac{1}{M}\sum_{j}|c_j|^2\leq \sum_{i,j}c_i\overline{c_j}G_k(i,j)\leq M \sum_{j}|c_j|^2.
\end{eqnarray}
To prove that $\{(s_j,p_j): j \geq 1\}$ is an interpolating sequence, given any bounded sequence $w=\{w_1, w_2, \ldots\}$, we need to find a constant $C_w$ such that  for every admissible kernel $k$,
\begin{eqnarray}\label{auxilaryzero}
\left( \left( \; \; (C_w^2 - w_i \overline{w}_j) k(\lambda_i , \lambda_j) \; \; \right) \right) \geq 0,
\end{eqnarray}
which, by Lemma \ref{infinitePick}, will prove our assertion. For any integer $n \ge 1$, choosing
$$\widetilde{c_j} = \left\{ \begin{array}{c} c_jw_j\sqrt{k((s_j,p_j),(s_j,p_j))} \text{ if }  1 \le j \le n,\\
 0 \text{ if } j > n \end{array} \right. $$ in the second inequality of (\ref{theconverse}), we get
\begin{equation}\label{auxilaryone}
\sum_{i,j=1}^nc_i\overline{c_j}w_i\overline{w_j}k((s_i,p_i),(s_j,p_j))\leq M \big(\sup_{i\ge 1} |w_i|\big)^2 \sum_{j=1}^n|c_j|^2k((s_j,p_j),(s_j,p_j)).
\end{equation}
Choosing
$$\widetilde{c_j}' = \left\{ \begin{array}{c} c_j\sqrt{k((s_j,p_j),(s_j,p_j))} \text{ if }  1 \le j \le n,\\
 0 \text{ if } j > n \end{array} \right. $$
  in the first inequality of (\ref{theconverse}), we get
\begin{eqnarray}\label{auxilarytwo}
\sum_{j=1}^n|c_j|^2k((s_j,p_j),(s_j,p_j))\leq M\sum_{i,j=1}^nc_i\overline{c_j}k((s_i,p_i),(s_j,p_j)).
\end{eqnarray}
Combining (\ref{auxilaryone}) and (\ref{auxilarytwo}) we get
$$
\sum_{i,j=1}^nc_i\overline{c_j}w_i\overline{w_j}k((s_i,p_i),(s_j,p_j))\leq  M^2 \big(\sup_{i\ge 1} |w_i|\big)^2 \sum_{i,j=1}^nc_i\overline{c_j}k((s_i,p_i),(s_j,p_j)). $$
Now, for any $l^2$ sequence $(c_j)$, we have the inequality above for any $n\ge 1$. Putting $C_w = M \big(\sup_{i\ge 1} |w_i|\big)$, the required inequality (\ref{auxilaryzero}) follows.

We now prove that $(ii)$ is equivalent to $(iii)$. First observe that condition (\ref{grammbelow}) is equivalent to the following:
\begin{eqnarray}\label{schurproduct}
(N-\delta_{ij})\cdot k((s_i,p_i),(s_j,p_j)) \geq 0,
 \end{eqnarray}for every admissible kernel $k$ on $\mathbb{G}$. Let $Y=\{(s_j,p_j): j\geq 1\}$. By the scalar-valued version of the Realization Theorem described in Section 2 (i.e., the way it is stated in Subsection 1.3, page 510 of \cite{BS}), there is a completely positive function $\Delta: Y \times  Y \to C(\overline{\mathbb D})^*$ such that for every $i,j\geq 1$,
 $$
 N-\delta_{ij}=\Delta((s_i,p_i),(s_j,p_j))\big(1-\varphi(\cdot,s_i,p_i)\overline{\varphi(\cdot,s_j,p_j)}\big).
 $$
 Let $\{e_j:j\geq1\}$ be the canonical orthogonal basis of $l^2$. Rewriting the above term we get
 $$ N + \Delta ( (s_i,p_i),(s_j,p_j)) \big( \varphi(\cdot, s_i, p_i) \overline{\varphi(\cdot , s_j, p_j)} \big) = \langle e_i,e_j \rangle + \Delta ((s_i,p_i),(s_j,p_j)) \big( 1  \big).$$
 By Lemma \ref{crucial-lemma}, there is a Hilbert space $\mathcal H$ and a function $L : Y \times Y \rightarrow B(C(\overline{\mathbb D}) ,\mathcal H)$ such that
 $$\Delta \left((s_i,p_i),(s_j,p_j) \right) (h_1 \overline{h_2}) =
\langle L(s_i,p_i) h_1, L(s_j,p_j)h_2 \rangle_{\mathcal H}$$ for every $h_1, h_2$ in $C(\overline{\mathbb D})$. Hence
 $$ N +  \langle L (s_i,p_i) \varphi(\cdot , s_i, p_i), L (s_j,p_j) \varphi(\cdot , s_j, p_j) \rangle = \langle e_i,e_j \rangle + \langle L (s_i,p_i) 1 , L (s_j,p_j) 1 \rangle.$$
 By equation (\ref{star-represt.}), this is the same as
 $$ N +   \langle \pi \varphi(\cdot , s_i, p_i) L (s_i,p_i) 1,  \pi \varphi(\cdot , s_j, p_j) L (s_j,p_j) 1 \rangle = \langle e_i,e_j \rangle + \langle L (s_i,p_i) 1 , L (s_j,p_j) 1 \rangle.$$
 Now we can define an isometry $V$ from the span of $$\{\sqrt{N} \oplus \pi \varphi(\cdot , s_j, p_j) L (s_j,p_j) 1 : j\geq1\}\subseteq \mathbb C\oplus \mathcal H$$ into the span of $\{e_j \oplus L (s_j,p_j) 1: j\geq1\}\subseteq l^2\oplus \mathcal H$ such that for each $ j\geq1$
 $$ V \left( \begin{array}{c} \sqrt N \\ \pi \varphi(\cdot , s_j, p_j) L (s_j,p_j) 1 \end{array} \right) = \left( \begin{array}{c} e_j \\ L (s_j,p_j) 1 \end{array} \right)$$
 and then extending by linearity. By a standard technique of adding an infinite dimensional Hilbert space to $\mathcal H$, if required, we can extend $V$ to a unitary from $\mathbb C \oplus \mathcal H$ onto $l^2 \oplus \mathcal H$. Now, write $V$ as
 \begin{eqnarray}\label{actionofV}
 \left(
     \begin{array}{cc}
       A & B \\
       C & D \\
     \end{array}
   \right)
   \end{eqnarray} and define $F:\mathbb G\to \mathcal B(\mathbb C,l^2)$ as
   $$ F(s,p)  =  A + B \pi \varphi(\cdot , s, p) (I_H - D \pi \varphi(\cdot , s, p) )^{-1} C . $$ By the Realization Theorem, $F$ is a bounded analytic function and $\|F\|_\infty\leq 1$. By (\ref{actionofV}) we have
 \begin{eqnarray}
 A \sqrt{N} + B \pi \varphi(\cdot , s_j, p_j) L (s_j, p_j) 1 & = & e_j \mbox{ and} \label{formulaforphi} \\
 C \sqrt{N} + D \pi \varphi(\cdot , s_j, p_j) L (s_j, p_j) 1 & = & L(s_j, p_j) 1. \nonumber \end{eqnarray}
Eliminating $L(s,p) 1$ we get that the function $\Phi:\mathbb G\to \mathcal B(\mathbb C,l^2)$ defined by $\Phi=\sqrt N F$ has the property $\Phi(s_j,p_j)=e_j$ for each $j \geq 1$. Therefore we have proved the following lemma.
 \begin{lem}\label{boundedbelow}
 Condition (\ref{grammbelow}) is equivalent to the existence of a function $\Phi$ in $H^\infty(\mathbb{G},\mathcal{B}(\mathbb{C}, l^2))$ of norm at most $\sqrt{N}$ such that $\Phi(s_j,p_j)=e_j$ for each $j \geq 1$.
 \end{lem}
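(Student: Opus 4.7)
The forward direction of the lemma is essentially contained in the construction carried out in the paragraphs above, so I focus on the converse: assuming $\Phi\in H^\infty(\mathbb G,\mathcal B(\mathbb C,l^2))$ exists with $\|\Phi\|_\infty\le\sqrt N$ and $\Phi(s_j,p_j)=e_j$, the task is to show $G_k\ge (1/N) I$ for every admissible scalar kernel $k$, equivalently positivity of the matrix $\bigl((N-\delta_{ij})k_{ij}\bigr)_{i,j\ge 1}$ for every such $k$. The plan is to reverse-engineer the $\Delta$-decomposition that produced $\Phi$ in the forward direction. Apply part~(R) of the Realization Theorem to $F:=\Phi/\sqrt N$, which lies in the closed unit ball of $H^\infty(\mathbb G,\mathcal B(\mathbb C,l^2))$, to obtain a Hilbert space $\mathcal H$, a unital $*$-representation $\pi:C(\overline{\mathbb D})\to\mathcal B(\mathcal H)$, and a unitary $V=\begin{pmatrix}A&B\\C&D\end{pmatrix}:\mathbb C\oplus\mathcal H\to l^2\oplus\mathcal H$ producing $F$ through the realization formula. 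Setting $\tilde u_j:=(I-D\pi(\varphi_j))^{-1}C\sqrt N\in\mathcal H$ and using $\sqrt N F(s_j,p_j)=e_j$, the vector equation
\[
V\begin{pmatrix}\sqrt N\\ \pi(\varphi_j)\tilde u_j\end{pmatrix}=\begin{pmatrix}e_j\\ \tilde u_j\end{pmatrix}
\]
falls out immediately.

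Comparing inner products of the two sides of this identity for indices $i,j$ and using $\pi(\varphi_j)^*\pi(\varphi_i)=\pi(\varphi_i\overline{\varphi_j})$ yields, after rearrangement,
\[
N-\delta_{ij}=\langle\pi(1-\varphi_i\overline{\varphi_j})\tilde u_i,\tilde u_j\rangle=\int_{\overline{\mathbb D}}(1-\varphi(\alpha,s_i,p_i)\overline{\varphi(\alpha,s_j,p_j)})\,d\mu_{ij}(\alpha),
\]
where $\varphi_j:=\varphi(\cdot,s_j,p_j)$, $\mu_{ij}(\cdot):=\langle E(\cdot)\tilde u_i,\tilde u_j\rangle$, and $E$ is the projection-valued spectral measure of $\pi$ on $\overline{\mathbb D}$. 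For any admissible scalar kernel $k$, multiplying through by $k_{ij}$ and approximating the integral by Riemann sums over Borel partitions $\{A_\ell\}$ of $\overline{\mathbb D}$ with representatives $\alpha_\ell\in A_\ell$, one expresses $\bigl((N-\delta_{ij})k_{ij}\bigr)_{i,j}$ as an entrywise limit of sums of Hadamard products
\[
\bigl((1-\varphi(\alpha_\ell,s_i,p_i)\overline{\varphi(\alpha_\ell,s_j,p_j)})\,k_{ij}\bigr)_{i,j}\;\odot\;\bigl(\langle E(A_\ell)\tilde u_i,E(A_\ell)\tilde u_j\rangle\bigr)_{i,j}.
\]
The first factor is positive semi-definite by Lemma~\ref{admi} (admissibility of $k$ in terms of the parametrized coordinate functions), and the second is a Gram matrix and hence positive semi-definite. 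The Schur product theorem then gives positivity of each summand, of the finite sum, and of the entrywise limit $\bigl((N-\delta_{ij})k_{ij}\bigr)$.

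The principal subtlety is that the measures $\mu_{ij}$ depend on the index pair $(i,j)$, so one is not simply integrating a family of positive semi-definite matrices against a single scalar positive measure. The compensating structural fact is that $\{\mu_{ij}\}$ is a matrix-valued positive-semi-definite Borel measure on $\overline{\mathbb D}$: for every Borel set $A$, the matrix $(\mu_{ij}(A))$ is positive semi-definite because it equals $(\langle E(A)\tilde u_i,E(A)\tilde u_j\rangle)$. This is precisely the extra ingredient that lets the Schur product theorem absorb the $(i,j)$-dependence in both the integrand and the measure simultaneously; making this interplay rigorous, and justifying the passage from Riemann sums to the integral in the positive-semi-definite order, is the main technical step of the converse direction.
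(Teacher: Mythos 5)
Your proposal is correct, and for the forward direction you rightly observe that it is exactly the construction the paper carries out immediately before stating the lemma. For the converse you supply a genuine argument where the paper merely asserts equivalence: you run the realization machinery backward, going through part (R), extracting the vectors $\tilde u_j$ and the projection-valued measure of $\pi$, and closing with the Schur-product step; this is the right way to do it, and in particular it avoids a trap you do not mention but which is worth noting --- applying part (M) of the Realization Theorem to $\Phi/\sqrt N$ with scalar admissible kernels $k\cdot I_{l^2}$ does \emph{not} directly yield $\bigl((N-\delta_{ij})k_{ij}\bigr)\geq 0$ (the natural test vectors only produce trivial diagonal inequalities), so the detour through the spectral decomposition and the Schur product theorem is actually needed. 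In effect you reproduce the $(\mathbf D)\Rightarrow(\mathbf M)$ implication for the self-adjoint data $J_{ij}=N-\delta_{ij}$, which is the direction the paper's Lemma~\ref{factor} makes rigorous in the other implication; so the logic is the paper's own chain of equivalences made explicit rather than a new route.
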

 Also note that condition (\ref{grammabove}) is equivalent to
 $$
 (M\delta_{ij}-1)k((s_i,p_i),(s_j,p_j)) \geq 0.
 $$ Proceeding as above one gets the following result.
 \begin{lem}\label{boundedabove}
 Condition (\ref{grammabove}) is equivalent to the existence of a function $\Psi$ in $H^\infty(\mathbb{G},\mathcal{B}( l^2, \mathbb{C}))$ of norm at most $\sqrt{M}$ such that $\Psi(s_j,p_j)e_j=1$ for each $j \geq 1$.
 \end{lem}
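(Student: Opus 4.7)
The plan is to mirror the proof of Lemma \ref{boundedbelow}, but with the unitary now mapping $l^2\oplus\mathcal H$ into $\mathbb C\oplus\mathcal H$ rather than the other way around, so that the Realization Theorem delivers a $\mathcal B(l^2,\mathbb C)$-valued function instead of a $\mathcal B(\mathbb C,l^2)$-valued one. First, conjugating $MI-G_k\ge 0$ by the diagonal matrix with entries $\sqrt{k_{ii}}$ recasts (\ref{grammabove}) as the equivalent condition $((M\delta_{ij}-1)\,k((s_i,p_i),(s_j,p_j)))\ge 0$ for every admissible kernel $k$.

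For the forward direction I would apply the scalar (M)$\Leftrightarrow$(D) form of the Realization Theorem (exactly as in Lemma \ref{boundedbelow}) to obtain a completely positive $\Delta:Y\times Y\to C(\overline{\mathbb D})^*$ satisfying $M\delta_{ij}-1=\Delta((s_i,p_i),(s_j,p_j))(1-\varphi_i\overline{\varphi_j})$, where $\varphi_l=\varphi(\cdot,s_l,p_l)$, and rearrange this as $M\delta_{ij}+\Delta(\varphi_i\overline{\varphi_j})=1+\Delta(1)$. Using Lemma \ref{crucial-lemma} to factor $\Delta(h_1\overline{h_2})=\langle L(s,p)h_1,L(t,q)h_2\rangle$ with the intertwining $L(s,p)(h_1 h_2)=\pi(h_1)L(s,p)h_2$, this becomes the Gram-matrix equality
$$\langle\sqrt{M}e_i\oplus\pi(\varphi_i)L_i,\,\sqrt{M}e_j\oplus\pi(\varphi_j)L_j\rangle_{l^2\oplus\mathcal H}=\langle 1\oplus L_i,\,1\oplus L_j\rangle_{\mathbb C\oplus\mathcal H},$$
where $L_j:=L(s_j,p_j)1$. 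Hence $V(\sqrt{M}e_j\oplus\pi(\varphi_j)L_j):=1\oplus L_j$ defines an isometry on the closed span of the left-hand vectors, which extends (after enlarging $\mathcal H$ by an infinite-dimensional summand if needed) to a unitary $V:l^2\oplus\mathcal H\to\mathbb C\oplus\mathcal H$. Writing $V=\begin{pmatrix}A&B\\C&D\end{pmatrix}$, solving the lower block for $L_j$ and feeding the result into the upper block yields $\sqrt{M}(A+B\pi(\varphi_j)(I-D\pi(\varphi_j))^{-1}C)e_j=1$, and the (R)$\Rightarrow$(H) direction of the Realization Theorem applied with $\mathcal L_1=l^2$ and $\mathcal L_2=\mathbb C$ certifies that $G(s,p):=A+B\pi(\varphi(\cdot,s,p))(I-D\pi(\varphi(\cdot,s,p)))^{-1}C$ lies in the closed unit ball of $H^\infty(\mathbb G,\mathcal B(l^2,\mathbb C))$. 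Then $\Psi:=\sqrt{M}G$ has all the required properties.

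For the converse one can retrace these steps: starting from a $\Psi$ with the stated properties, the (H)$\Rightarrow$(D) direction of the Realization Theorem applied to $\Psi/\sqrt{M}$ supplies a completely positive function certifying $(M-\Psi(s_i,p_i)\Psi(s_j,p_j)^*)\,k_{ij}\ge 0$ for every admissible $k$, and one combines this with the admissible-multiplier characterisation of $H^\infty(\mathbb G)$ (Lemma 3.1 of \cite{BS}) and the interpolation data $\Psi(s_j,p_j)e_j=1$ — most transparently by tracking how the bounded operator $M_\Psi:H_k\otimes l^2\to H_k$ acts on the vectors $k(\cdot,(s_j,p_j))\otimes e_j$ — to extract the desired $(M\delta_{ij}-1)\,k_{ij}\ge 0$. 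The principal point requiring care is book-keeping: the swap of source and target of $V$ forces every scalar-slot occurrence of $\sqrt{N}$ in the proof of Lemma \ref{boundedbelow} to be replaced by $\sqrt{M}e_j$ in the $l^2$ slot and each $e_j$ by $1$, and the whole forward computation hinges on recognising $M\delta_{ij}=\langle\sqrt{M}e_i,\sqrt{M}e_j\rangle_{l^2}$ and $1=\langle 1,1\rangle_{\mathbb C}$ as the two sides of the displayed Gram equality.
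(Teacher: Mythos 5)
Your forward direction (condition \eqref{grammabove} $\Rightarrow$ existence of $\Psi$) is correct and faithfully mirrors the paper's proof of Lemma \ref{boundedbelow}, which is exactly what the paper intends by ``proceeding as above'': the Schur conjugation recast, the passage to a completely positive $\Delta$, the factorization via Lemma \ref{crucial-lemma}, the Gram--matrix identity $\langle\sqrt{M}e_i\oplus\pi(\varphi_i)L_i,\sqrt{M}e_j\oplus\pi(\varphi_j)L_j\rangle=\langle 1\oplus L_i,1\oplus L_j\rangle$, the lurking isometry $V:l^2\oplus\mathcal H\to\mathbb C\oplus\mathcal H$, and the bookkeeping of the block entries are all exactly right, with the roles of $\mathbb C$ and $l^2$ swapped as they should be.

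The converse direction as you have sketched it, however, has a real gap. From $\|\Psi\|\le\sqrt{M}$ the Realization Theorem (or, as you note, the multiplier characterization) gives $\big(M-\Psi(s_i,p_i)\Psi(s_j,p_j)^*\big)k_{ij}\ge 0$ for all scalar admissible $k$, but this is not yet \eqref{grammabove}: the off--diagonal quantities $\Psi(s_i,p_i)e_j$ for $i\ne j$ are completely unconstrained by the hypothesis $\Psi(s_j,p_j)e_j=1$. The step you call ``tracking how $M_\Psi$ acts on $k(\cdot,(s_j,p_j))\otimes e_j$'' does not close this gap: $M_\Psi\big(k(\cdot,\lambda_j)\otimes e_j\big)=\psi_j\cdot k(\cdot,\lambda_j)$ with $\psi_j(\cdot)=\Psi(\cdot)e_j$, and computing $\|M_\Psi u\|$ or pairing with $k(\cdot,\lambda_i)$ inevitably produces cross terms $k(\lambda_i,\lambda_j)\psi_j(\lambda_i)$ in which $\psi_j(\lambda_i)$, $i\ne j$, cannot be discarded; similarly $\big(\Psi(s_i,p_i)\Psi(s_j,p_j)^*-\delta_{ij}\big)k_{ij}$ is a Gram matrix minus the identity, Schur--multiplied by $k_{ij}$, and is not positive in general. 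So the desired inequality $\big(M\delta_{ij}-1\big)k_{ij}\ge 0$ does not drop out of this calculation. To be fair, the paper itself dispatches Lemma \ref{boundedabove} with a single sentence (and never spells out the converse of Lemma \ref{boundedbelow} either), so your difficulty is inherited from the source; but since you explicitly claim a route for the converse, it is worth flagging that the route you describe does not actually reach the target, and the genuine content of the reverse implication still needs an argument.
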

 Now we are ready to prove that $(ii)$ and $(iii)$ are equivalent. Suppose $(ii)$ holds. Then by Lemma \ref{boundedbelow} there exists a function $\Phi$ in $H^\infty(\mathbb{G},\mathcal{B}(\mathbb{C}, l^2))$ of norm at most $\sqrt{N}$ such that $\Phi(s_j,p_j)=e_j$ for each $j \geq 1$. Let $\phi_1,\phi_2,\dots$ be functions on $\mathbb{G}$ such that
 $$
 \Phi(s,p)=(\phi_1(s,p),\phi_2(s,p),\dots)^t. $$
 The norm of $\Phi$ is no greater than $\sqrt{N}$. This implies that for all $(s,p)\in \mathbb{G}$,
 $$\sum_i|\phi_i(s,p)|^2 \leq N.$$
 Define $\Psi(s,p) = \Phi(s,p)^t$. Then $\Psi$ is a function in $H^\infty(\mathbb{G},\mathcal{B}( l^2, \mathbb{C}))$ of norm at most $\sqrt{N}$ such that $\Psi(s_i,p_i)e_i=1$ for each $i$ and hence by Lemma \ref{boundedabove} condition (\ref{grammabove}) holds with the constant $N$ in place of $M$. Note that for each $i \geq 1$, $\phi_i$ is the function such that $\phi_i(s_j,p_j)=\delta_{ij}$, for all $j \geq 1$. Hence $\{(s_j,p_j): j\geq 1\}$ is strongly separated.

 Conversely, suppose $(iii)$ holds. Therefore by Lemma \ref{boundedabove} there exists a function $\Psi$ in $H^\infty(\mathbb{G},\mathcal{B}( l^2, \mathbb{C}))$ of norm at most $\sqrt{M}$ such that $\Psi(s_i,p_i)e_i=1$ for each $i \geq 1$. Write $\Psi$ as
 $$
 \Psi(s,p)=(\psi_1(s,p),\psi_2(s,p),\dots),
 $$where the functions $\psi_i$s are such that $\sum_i|\psi_i(s,p)|^2 \leq M$ and $\psi_i(s_i,p_i)=1$ for each $i$. Moreover, the sequence $\{(s_j,p_j): j\geq 1\}$ is strongly separated. This means there exist a constant $L$ and a sequence $\varphi_i$ of functions on $\mathbb{G}$ such that $\varphi_i(s_j,p_j)=\delta_{ij}$ for each $j$ and $\|\varphi_i\|_\infty \leq L$. Define
 $$
 \Phi(s,p)=(\varphi_1(s,p)\psi_1(s,p),\varphi_2(s,p)\psi_2(s,p),\dots)^t.
 $$ Clearly, $\|\Phi\|_\infty \leq L\sqrt{M}$ and $\Phi(s_i,p_i)=e_i$ for all $i$ which, by Lemma \ref{boundedbelow}, proves that $(ii)$ holds.

 Note that $(i)$  and  $(iv)$ together are equivalent to $(ii)$ and $(iii)$ together. We have proved that $(i)$  is equivalent to  $(iv)$, and $(ii)$ is equivalent to  $(iii)$. Hence the proof of Theorem \ref{Interpolate} is complete. \qed

We end this section with a sufficient condition for a sequence to be interpolating. Suppose $\{(s_j,p_j): j \geq 1\}$ be a sequence of points in $\mathbb{G}$ such that for some $\alpha$ in $\overline{\mathbb{D}}$, the sequence $\{z_j=\varphi(\alpha,s_j,p_j): j\geq 1\}$ in $\mathbb{D}$ is interpolating, where $\varphi(\alpha, \cdot)$ is the coordinate function as defined by (\ref{testfunction}). Then the sequence $\{(s_j,p_j): j \geq 1\}$ is also interpolating. Because for each bounded sequence $w=\{w_j: j \geq 1\}$, the function $g_w\circ \varphi ( \alpha, \cdot )$ interpolates $(s_j,p_j)$ to $w_j$, where $g_w$ is the function from $\mathbb D$ to $\mathbb D$ that interpolates $z_j$ to $w_j$.
So we have the following Carleson-type condition.
\begin{lem}\label{suff}
Let $\{(s_j,p_j): j \geq 1\}$ be a sequence of points in $\mathbb{G}$. Let $\alpha$ be in $\overline{\mathbb{D}}$ and $\varphi(\alpha,\cdot)$ be the coordinate function as defined in (\ref{testfunction}). Denote $z_j:=\varphi(\alpha,s_j,p_j)$. If there exits $\delta>0$ such that
$$
\prod_{j\neq k}\left|\frac{z_j-z_k}{1-\overline{z_k}z_j}\right| \geq \delta, \text{ for all }k,
$$ then $\{(s_j,p_j): j \geq 1\}$ is an interpolating sequence.
\end{lem}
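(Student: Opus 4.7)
The strategy is essentially already sketched in the paragraph preceding the lemma, so the proposal is to turn that sketch into a clean proof in three steps.

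The first step is to apply Carleson's classical interpolation theorem on the unit disk. The hypothesis
\[
\prod_{j \neq k} \left| \frac{z_j - z_k}{1 - \overline{z_k} z_j} \right| \geq \delta
\]
is precisely Carleson's sufficient (and necessary) condition for $\{z_j\}$ to be interpolating for $H^\infty(\mathbb{D})$. Hence, given any bounded sequence $w = \{w_j\}$ of complex numbers, I can produce a function $g_w \in H^\infty(\mathbb{D})$ with $g_w(z_j) = w_j$ for every $j \geq 1$.

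The second step is to view $\varphi(\alpha, \cdot)$ as a bounded holomorphic function on $\mathbb{G}$. The formula (\ref{testfunction}) shows $\varphi(\alpha, \cdot)$ is holomorphic on $\mathbb{G}$ (since $|s| < 2$ there), and by (\ref{co-ordinates}) its image lies in $\overline{\mathbb{D}}$. I need a little more, namely that the image actually lies in the open disk $\mathbb{D}$, so that the composition with $g_w$ makes sense. This follows from the open mapping principle: $\varphi(\alpha, \cdot)$ is a non-constant holomorphic function on the open set $\mathbb{G} \subset \mathbb{C}^2$ (one can restrict to a suitable complex line at any point to see this by the one-variable open mapping theorem), so its image is open in $\mathbb{C}$; being contained in $\overline{\mathbb{D}}$, it must lie in $\mathbb{D}$. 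This ``image lies inside the open disk'' point is the only mildly delicate step; everything else is routine.

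The third step is to define
\[
f_w := g_w \circ \varphi(\alpha, \cdot) : \mathbb{G} \to \mathbb{C}.
\]
Then $f_w$ is holomorphic on $\mathbb{G}$, satisfies $\|f_w\|_\infty \leq \|g_w\|_\infty < \infty$, and
\[
f_w(s_j, p_j) = g_w(\varphi(\alpha, s_j, p_j)) = g_w(z_j) = w_j
\]
for every $j \geq 1$. Thus $f_w \in H^\infty(\mathbb{G})$ interpolates the prescribed values, and so $\{(s_j, p_j): j \geq 1\}$ is an interpolating sequence for $H^\infty(\mathbb{G})$, as required.
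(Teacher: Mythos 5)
Your proof is correct and follows essentially the same route as the paper, which only sketches the argument in the paragraph preceding Lemma \ref{suff}: invoke Carleson's interpolation theorem on $\mathbb{D}$ and compose with the coordinate function $\varphi(\alpha,\cdot)$. Your additional check that the image of $\varphi(\alpha,\cdot)$ lies in the \emph{open} disk $\mathbb{D}$ (via the open mapping principle) is a detail the paper omits at this point, though it does record the stronger fact $\sup_{\alpha}|\varphi(\alpha,s,p)|<1$ inside the proof of Lemma \ref{factor}.
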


\section{Cyclic $\Gamma$--isometries}

This short section proves a result about cyclic $\Gamma$--isometries. The main result of this section will be used in the proof of Theorem 2.

A $\Gamma$--contraction $(R,U)$ is called a $\Gamma$--{\em unitary}
if $U$ is a unitary operator. In such a case, $R$ and $U$ are normal operators and the joint spectrum
$\sigma(R,U)$ of $(R,U)$ is contained in the distinguished
boundary of $\Gamma$.

A $\Gamma$--contraction $(T,V)$ acting on a Hilbert space $\mathcal K$ is called a $\Gamma$--{\em isometry} if there
exist a Hilbert space $\mathcal{N}$ containing $\mathcal{K}$ and a
$\Gamma$--unitary $(R,U)$ on $\mathcal{N}$ such
that $\mathcal{K}$ is left invariant by both $R$ and
$U$, and
$$T = R|_{\mathcal{K}} \mbox{ and } V = U|_{\mathcal{K}}.$$

In other words, $(T,V)$ is a $\Gamma$--isometry if it has a $\Gamma$--unitary
extension $(R,U)$.

The two following theorems are from \cite{ay-jot} and characterize $\Gamma$--unitaries and $\Gamma$--isometries.

\begin{thm} \label{G-unitary}
Let $(R,U)$ be a pair of commuting operators defined on a Hilbert
space $\mathcal{H}.$ Then the following are equivalent:
\begin{enumerate}

\item $(R,U)$ is a $\Gamma$--unitary;
\item there exist commuting unitary operators $U_{1}$ and $
    U_{2}$ on $\mathcal{H}$ such that
$$R= U_{1}+U_{2},\quad U= U_{1}U_{2};$$
\item $U$ is unitary,\;$R=R^*U,\;$\;and $\| R \| \leq2$.
    \item $U$ is a unitary and $R = W + W^* U$ for some
        unitary $W$ commuting with $U$.
\end{enumerate}
\end{thm}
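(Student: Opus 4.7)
The plan is to traverse the equivalences as $(2)\Leftrightarrow(4)$, $(2)\Rightarrow(3)$, $(2)\Rightarrow(1)$, $(3)\Rightarrow(2)$, and $(1)\Rightarrow(3)$, with only the last implication carrying real analytic content. The equivalence $(2)\Leftrightarrow(4)$ is immediate: in one direction take $W=U_{1}$ so that $W+W^{*}U=U_{1}+U_{1}^{*}(U_{1}U_{2})=U_{1}+U_{2}=R$ and $W$ commutes with $U=U_{1}U_{2}$; conversely set $U_{1}=W$ and $U_{2}=W^{*}U$, which are commuting unitaries with sum $R$ and product $U$. For $(2)\Rightarrow(3)$, $U=U_{1}U_{2}$ is unitary, $\|R\|\leq\|U_{1}\|+\|U_{2}\|=2$, and $R^{*}U=(U_{1}^{*}+U_{2}^{*})(U_{1}U_{2})=U_{2}+U_{1}=R$. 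The implication $(2)\Rightarrow(1)$ follows from the spectral theorem: the joint spectrum of the commuting unitaries $(U_{1},U_{2})$ lies in $\mathbb{T}^{2}$, so $\sigma(R,U)\subseteq\{(z_{1}+z_{2},z_{1}z_{2}):|z_{1}|=|z_{2}|=1\}=b\Gamma\subseteq\Gamma$, and functional calculus yields $\|\xi(R,U)\|=\sup_{\sigma(R,U)}|\xi|\leq\sup_{\Gamma}|\xi|$ for every polynomial $\xi$.

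For $(3)\Rightarrow(2)$: from $R=R^{*}U$, taking adjoints gives $UR^{*}=R$, and hence $UR=U(R^{*}U)=(UR^{*})U=RU$, so $R$ commutes with $U$ and $U^{*}$. Then $R^{*}R=(RU^{*})R=R(RU^{*})=R^{2}U^{*}=RR^{*}$, showing $R$ is normal and $(R,U)$ is a commuting normal pair. Its joint spectrum is confined to the set $\{(s,p):|p|=1,\ |s|\leq 2,\ s=\bar{s}p\}$; writing $p=e^{2i\theta}$ forces $s=2te^{i\theta}$ with real $t\in[-1,1]$, and then the roots of $z^{2}-sz+p$ are $(t\pm i\sqrt{1-t^{2}})e^{i\theta}$, both unimodular. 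Hence $\sigma(R,U)\subseteq b\Gamma$. A Borel-measurable selection $f\colon b\Gamma\to\mathbb{T}$ of one root then delivers, via the Borel functional calculus, commuting unitaries $U_{1}=f(R,U)$ and $U_{2}=(p/f)(R,U)$ satisfying $U_{1}+U_{2}=R$ and $U_{1}U_{2}=U$, since $f$ is a root of $z^{2}-sz+p$.

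The subtle step is $(1)\Rightarrow(3)$. Since $U$ is unitary by hypothesis and $\|R\|\leq 2$ is the spectral-set bound applied to the polynomial $\xi(s,p)=s$, only the identity $R=R^{*}U$ needs proof. The strategy is to exploit the parametrized coordinate functions $\varphi(\alpha,s,p)=(2\alpha p-s)/(2-\alpha s)$ of (\ref{testfunction}). For $|\alpha|<1$ the denominator $2-\alpha s$ has no zero on $\Gamma$, so $\varphi(\alpha,\cdot,\cdot)$ is holomorphic on a neighborhood of the polynomially convex set $\Gamma$; by Oka--Weil it is uniformly approximable on $\Gamma$ by polynomials, and since $|\varphi(\alpha,\cdot)|\leq 1$ on $\Gamma$ by (\ref{co-ordinates}), the spectral-set property gives $\|\varphi(\alpha,R,U)\|\leq 1$, equivalently
\[
(2I-\alpha R)^{*}(2I-\alpha R)-(2\alpha U-R)^{*}(2\alpha U-R)\geq 0.
\]
Expanding and using $U^{*}U=I$ reorganizes this as
\[
(1-|\alpha|^{2})(4I-R^{*}R)+2\alpha(R^{*}U-R)+2\bar{\alpha}(U^{*}R-R^{*})\geq 0.
\]
The main obstacle is now only bookkeeping: letting $|\alpha|\to 1$ kills the first term, and the remaining inequality $\mathrm{Re}\bigl(\alpha(R^{*}U-R)\bigr)\geq 0$ for every $\alpha\in\mathbb{T}$ forces $R^{*}U-R=0$, yielding $R=R^{*}U$ as required.
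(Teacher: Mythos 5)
The paper states this theorem without proof, attributing it to Agler and Young \cite{ay-jot}, so there is no internal proof to compare against; I will therefore simply assess your argument.

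Your proof is correct and self-contained for the definition used in this paper, namely a $\Gamma$-contraction $(R,U)$ with $U$ unitary. The implications $(2)\Leftrightarrow(4)$, $(2)\Rightarrow(3)$ and $(2)\Rightarrow(1)$ are handled cleanly. In $(3)\Rightarrow(2)$ the step deducing $RU=UR$ from $R=R^*U$ is unnecessary since commutativity is already a standing hypothesis of the theorem, but the rest is fine: the normality of $R$, the location of the joint spectrum of the commuting normal pair inside $\{(s,p):|p|=1,\,|s|\le 2,\,s=\bar s p\}=b\Gamma$, and the Borel-measurable selection of a root of $z^2-sz+p$ (e.g.\ via a Borel branch of $\sqrt{s^2-4p}$), fed through the Borel functional calculus for $(R,U)$, all go through.

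The real content is $(1)\Rightarrow(3)$, where the definition hands you only the spectral-set inequality and unitarity of $U$. Your route is sound: for $|\alpha|<1$ the function $\varphi(\alpha,\cdot)$ is holomorphic on a neighborhood of the polynomially convex set $\Gamma$ and bounded by $1$ there, Oka--Weil gives uniform polynomial approximation, the spectral-set property passes to the limit to give $\|(2\alpha U-R)(2I-\alpha R)^{-1}\|\le 1$, the expansion is computed correctly, and the limit $|\alpha|\to 1$ is legitimate because the cone of positive operators is norm-closed. Testing $\mathrm{Re}\bigl(\alpha(R^{*}U-R)\bigr)\ge 0$ against both $\alpha$ and $-\alpha$, then against $\alpha$ and $i\alpha$, forces $R^{*}U-R=0$. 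This is a nice argument in the spirit of the rest of the paper's systematic use of the coordinate functions $\varphi(\alpha,\cdot)$. It is not the route of Agler and Young, who in \cite{ay-jot} take as the \emph{definition} of a $\Gamma$-unitary a commuting normal pair with joint spectrum in $b\Gamma$, which makes the analogue of $(1)\Rightarrow(3)$ essentially immediate from Gelfand theory and shifts the work to reconciling that definition with the spectral-set formulation; your version does that reconciliation head-on.
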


\begin{thm} \label{G-isometry}
Let $T\,,\,V$ be commuting operators on a Hilbert space
$\mathcal{H}.$ The following statements are all
equivalent:
\begin{enumerate}
\item $(T,V)$ is a $\Gamma$--isometry,
\item $(T,V)$ is a $\Gamma$--contraction and $V$ is isometry,
\item $V$ is an isometry\;,\;$T=T^*V$ and $\| T \| \leq2.$

\end{enumerate}
\end{thm}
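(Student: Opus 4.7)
The plan is to prove the cycle $(1) \Rightarrow (2) \Rightarrow (3) \Rightarrow (1)$.

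$(1) \Rightarrow (2)$ is immediate from the definitions: if $(R,U)$ is a $\Gamma$-unitary extension of $(T,V)$ on $\mathcal{N} \supset \mathcal{H}$ with $\mathcal{H}$ jointly invariant under $R$ and $U$, then $\xi(T,V) = \xi(R,U)|_{\mathcal{H}}$ for every polynomial $\xi$ forces $\|\xi(T,V)\| \leq \|\xi(R,U)\| \leq \sup_\Gamma|\xi|$, and $V = U|_{\mathcal H}$ is isometric because $U$ is unitary and $\mathcal{H}$ is $U$-invariant.

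For $(2) \Rightarrow (3)$, the bound $\|T\| \leq 2$ follows from the $\Gamma$-contractive inequality applied to $\xi(s,p) = s$. To deduce $T = T^*V$, I would work with the parametrized coordinate functions $\varphi(\alpha,s,p) = (2\alpha p - s)/(2 - \alpha s)$. By (\ref{co-ordinates}), each $\varphi(\alpha,\cdot)$ is bounded by $1$ on $\Gamma$, and since $|s|<2$ on $\Gamma$ it is holomorphic in a neighborhood of $\Gamma$; polynomial convexity of $\Gamma$ together with Oka--Weil approximation then upgrades the $\Gamma$-contractive inequality from polynomials to $\varphi$, giving $\|\varphi(\alpha, T, V)\| \leq 1$ for every $|\alpha| < 1$. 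Recasting this as $\varphi^*\varphi \leq I$, expanding, and using $V^*V = I$ produces, with $C := T - T^*V$,
$$(1 - |\alpha|^2)(T^*T - 4) + 2\alpha C + 2\bar\alpha C^* \leq 0.$$
For a fixed vector $x$, choosing $\alpha = re^{i\theta}$ with $\theta = -\arg\langle Cx, x\rangle$ and letting $r \to 1^-$ wipes out the first term and leaves $|\langle Cx, x\rangle| \leq 0$; polarization in the complex Hilbert space $\mathcal H$ then forces $C = 0$, so $T = T^*V$.

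For $(3) \Rightarrow (1)$, extend $V$ to its minimal unitary extension $U$ on a Hilbert space $\mathcal N \supset \mathcal H$, so that $\mathcal H$ is $U$-invariant and $\bigcup_{n \geq 0} U^{-n}\mathcal H$ is dense in $\mathcal N$. Define $R$ on this dense subspace by $R(U^{-n}x) := U^{-n}Tx$ for $x \in \mathcal H$; the commutation $TV = VT$ combined with $U|_{\mathcal H} = V$ makes $R$ well defined and bounded by $\|T\| \leq 2$, and $R$ commutes with $U$ by construction. Taking the adjoint of $T = T^*V$ gives $V^*T = T^*$, and a short calculation using this together with the commutation shows $R^*z = U^{-1}Tz$ for $z \in \mathcal H$; consequently $R^*Uz = U^{-1}TVz = U^{-1}VTz = Tz = Rz$, which, together with the commutation of $R$ and $U$, propagates to $R^*U = R$ on all of $\mathcal N$. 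By Theorem \ref{G-unitary}, $(R,U)$ is a $\Gamma$-unitary extension of $(T,V)$, so $(T,V)$ is a $\Gamma$-isometry.

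The main obstacle is $(2) \Rightarrow (3)$: the operator inequality coming from $\|\varphi(\alpha, T, V)\| \leq 1$ mixes the norm bound on $T$ with the desired identity $T = T^*V$, and one must choose the ordering $\varphi^*\varphi \leq I$ (rather than $\varphi\varphi^*$) so that the isometric hypothesis $V^*V = I$ cancels the $|\alpha|^2$ coefficient and isolates $T - T^*V$ in the linear-in-$\alpha$ piece.
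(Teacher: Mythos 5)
The paper does not prove this theorem at all: it is stated (together with Theorem \ref{G-unitary}) with the attribution ``The two following theorems are from \cite{ay-jot}.'' So there is no in-paper proof to compare against; your attempt is an independent reconstruction. As such it is correct, and it follows what I believe is essentially the Agler--Young line of argument, so I will comment on the argument itself.

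Direction $(1)\Rightarrow(2)$ is fine. In $(2)\Rightarrow(3)$ you have the right mechanism: pass from the polynomial von Neumann inequality to $\|\varphi(\alpha,T,V)\|\le 1$ for $|\alpha|<1$ via Oka--Weil on the polynomially convex set $\Gamma$, congruence by $2I-\alpha T$, use $V^*V=I$ to cancel the $|\alpha|^2$ coefficient, and let $|\alpha|\to 1^-$ in the direction that isolates $|\langle Cx,x\rangle|$, then polarize. The computation producing $(1-|\alpha|^2)(T^*T-4I)+2\alpha C+2\bar\alpha C^*\le 0$ with $C=T-T^*V$ is correct, and your observation that one must choose $\varphi^*\varphi\le I$ (not $\varphi\varphi^*\le I$) so that $V^*V=I$ is usable is exactly the right point. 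One small imprecision: you write ``since $|s|<2$ on $\Gamma$''; on the closed set $\Gamma$ one only has $|s|\le 2$, but since $|\alpha|<1$ the denominator $2-\alpha s$ still stays bounded away from $0$, so the Oka--Weil step survives. For $(3)\Rightarrow(1)$, your construction of $R$ on $\bigcup_{n\ge 0}U^{-n}\mathcal H$ is well-defined and bounded (using that any finite combination can be written as $U^{-n}x$ with $x\in\mathcal H$), the identity $R^*z=U^{-1}Tz$ on $\mathcal H$ follows from $V^*T=T^*$ (the adjoint of $T=T^*V$), and $R^*U=R$ then propagates by commutation to all of $\mathcal N$; invoking Theorem \ref{G-unitary}(3) closes the loop. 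An alternative route used in some expositions is to first apply the Wold decomposition $V=V_u\oplus V_s$ and treat the unitary and shift summands separately; your direct passage to the minimal unitary extension of $V$ is shorter and equally valid. Overall the proposal is a correct proof of the cited theorem.
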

A $\Gamma$--coisometry is the adjoint (componentwise) of a $\Gamma$--isometry. Agler and Young proved the following remarkable result which we shall need.

\begin{thm}[Agler and Young, Theorem 3.1 in \cite{ay-jot}] \label{dilation} Let $(S,P)$ be a $\Gamma$--contraction on a Hilbert space $\mathcal H$. There exists a Hilbert space $K$ containing $\mathcal H$ and a $\Gamma$--coisometry $(S^\flat, P^\flat)$ on $K$ such that $\mathcal H$ is invariant under $S^\flat$ and $P^\flat$, and $S = S^\flat|_H, P = P^\flat|_H$. \end{thm}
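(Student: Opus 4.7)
The plan is to reduce the existence of a $\Gamma$-coisometric extension of $(S,P)$ to the existence of a $\Gamma$-isometric dilation of the adjoint pair $(S^*,P^*)$, and then to construct the latter via a Sz.-Nagy-type scheme combined with the fundamental operator of a $\Gamma$-contraction. First, I would observe that $(S^*,P^*)$ is itself a $\Gamma$-contraction, because $\Gamma$ is invariant under complex conjugation: for any polynomial $\xi$, $\xi(S^*,P^*) = \tilde\xi(S,P)^*$ where $\tilde\xi$ has conjugated coefficients, so $\|\xi(S^*,P^*)\| = \|\tilde\xi(S,P)\| \leq \sup_\Gamma |\tilde\xi| = \sup_\Gamma |\xi|$. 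If I can produce a $\Gamma$-isometry $(T,V)$ on some $K \supseteq \mathcal H$ with $\mathcal H^\perp$ invariant under both $T$ and $V$ and with $P_{\mathcal H} T|_{\mathcal H} = S^*$, $P_{\mathcal H} V|_{\mathcal H} = P^*$, then the adjoint pair $(S^\flat, P^\flat) := (T^*, V^*)$ is a $\Gamma$-coisometry on $K$ with $\mathcal H$ invariant under both operators (invariance of $\mathcal H^\perp$ under $T,V$ is dual to invariance of $\mathcal H$ under $T^*,V^*$) and $S^\flat|_{\mathcal H} = S$, $P^\flat|_{\mathcal H} = P$, which is exactly the conclusion of the theorem.

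To build $(T,V)$, I would start from the Sz.-Nagy minimal isometric dilation of the Hilbert-space contraction $P^*$ (note $\|P\|\le 1$ because the coordinate polynomial $p$ has sup-norm $1$ on $\Gamma$), producing the isometry $V$ on $K \supseteq \mathcal H$ with the standard invariance of $\mathcal H^\perp$ under $V$. The remaining task is to construct an operator $T$ on $K$ that commutes with $V$, dilates $S^*$ (with $\mathcal H^\perp$ invariant under $T$), and makes $(T,V)$ a $\Gamma$-isometry; by Theorem \ref{G-isometry} this reduces to verifying $\|T\| \leq 2$ and $T = T^* V$.

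The hard step is producing $T$, and I expect the fundamental operator of $(S^*,P^*)$ to be the crucial tool. Following Agler and Young, one first establishes that every $\Gamma$-contraction $(A,B)$ admits a unique operator $F$ on the defect space $\mathcal D_B := \overline{\operatorname{ran}}(I-B^*B)^{1/2}$ with $\|F\|\le 1$ satisfying the fundamental equation $A - A^*B = D_B F D_B$, where $D_B = (I-B^*B)^{1/2}$. Existence of $F$ is the main obstacle; the idea is to exploit the parametrized coordinate functions $\varphi(\alpha, s, p)$ from \eqref{testfunction}, which by \eqref{co-ordinates} send $\mathbb G$ into $\overline{\mathbb D}$ and therefore yield $\|\varphi(\alpha,S,P)\| \le 1$ for every $\alpha \in \overline{\mathbb D}$. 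Expanding this family of inequalities, multiplying out by $|2 - \alpha S|^2$ and optimising over $\alpha$ produces the operator inequality from which $F$ is extracted. With $F$ in hand, $T$ is defined explicitly on the Sz.-Nagy dilation space by an ampliation formula that uses $F$ on the shift part of the Wold decomposition of $V$; a direct computation then verifies that $T$ commutes with $V$, extends $S^*$, and satisfies $T=T^*V$ together with $\|T\|\le 2$.

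Taking adjoints closes the argument: $(S^\flat, P^\flat) := (T^*, V^*)$ is the required $\Gamma$-coisometric extension of $(S,P)$ on the same Hilbert space $K$. The invariance of $\mathcal H$ under $S^\flat, P^\flat$ with $S^\flat|_{\mathcal H}=S$, $P^\flat|_{\mathcal H}=P$ is immediate from the block-triangular form of $T,V$ relative to the decomposition $K = \mathcal H \oplus \mathcal H^\perp$ dictated by the Sz.-Nagy construction.
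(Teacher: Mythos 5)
This theorem is quoted in the paper as a citation (Agler and Young, Theorem 3.1 in \cite{ay-jot}); the paper gives no proof of its own, so there is nothing internal to compare against. Assessed on its own terms, your overall plan is sound and recognizable: passing to $(S^*,P^*)$ and building a $\Gamma$-isometric co-extension is exactly dual to the stated conclusion, and the reduction step is correct in every detail ($\Gamma$ is conjugation-invariant, so $(S^*,P^*)$ is a $\Gamma$-contraction; block-triangularity under $K=\mathcal H\oplus\mathcal H^\perp$ turns $P_{\mathcal H}T\vert_{\mathcal H}=S^*$ into $T^*\vert_{\mathcal H}=S$). The route you then take — Sz.-Nagy minimal isometric dilation of $P^*$ to get $V$, and a commuting $T$ built from the fundamental operator — is essentially the explicit dilation scheme of Bhattacharyya, Pal and Shyam Roy \cite{BPSR}, which the present paper cites; it is not the argument Agler and Young originally gave, though it proves the same statement.

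There are, however, two genuine gaps that prevent the proposal from counting as a proof. First, the existence (and contractivity) of the fundamental operator $F$ with $A-A^*B=D_BFD_B$ is itself a theorem requiring a careful argument; your description (``multiplying out by $|2-\alpha S|^2$ and optimising over $\alpha$'') is a gesture at the Fourier-averaging of the inequality $\varphi(\alpha,S,P)^*\varphi(\alpha,S,P)\le I$, but you never extract the operator inequality that gives $F$, let alone establish $\|F\|\le 1$. Second, and more seriously, the operator $T$ is never written down. You assert ``an ampliation formula that uses $F$ on the shift part of the Wold decomposition of $V$,'' but without the explicit formula one cannot verify the three things that actually carry the weight of the theorem: that $T$ commutes with $V$, that $T=T^*V$, and that $\|T\|\le 2$ (and also that $\mathcal H^\perp$ is $T$-invariant with $P_{\mathcal H}T\vert_{\mathcal H}=S^*$). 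These verifications are where the work is; a proof must exhibit $T$ and carry them out. As it stands, the proposal is a correct plan with the two central lemmas left as assertions.
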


Let $\mu$ be a regular Borel measure on $b\Gamma$. On $H^2(b\Gamma , \mu)$, define two commuting bounded operators
$$(M_s^\mu f)(s,p) = sf(s,p) \mbox{ and } M_p^\mu f(s,p) = pf(s,p).$$
Since $(s,p) \in b\Gamma$ (equivalently, $|p|=1, s = \overline{s}p$ and $|s|\le 2$), it is easy to check that $(M_s^\mu ,M_p^\mu )$ is a $\Gamma$--isometry on $H^2(b\Gamma , \mu)$. Indeed, according to one of the characterizations of a $\Gamma$--isometry given above, we need to show that $M_p^\mu $ is an isometry, $M_s^\mu = (M_s^\mu)^*M_p^\mu$ and $\|M_s^\mu\| \le 2$ all of which follow from the fact that $(s,p) \in b\Gamma$. Moreover, $(M_s^\mu ,M_p^\mu )$ is $cyclic$ with the constant function $1$ serving as the cyclic vector because
$$\overline{{\rm span}} \{(M_s^\mu)^m(M_p^\mu)^n 1 : m,n \in \mathbb N\} = H^2(b\Gamma, \mu).$$

Conversely, if $(T,V)$ is a $\Gamma$--isometry on $\mathcal H$ with a cyclic vector $h_0$, we extend it to a $\Gamma$--unitary $(R,U)$ on $K$, say. Since $R$ and $U$ are commuting normal operators, the $C^*$-algebra $C^*(R,U)$ generated by them is commutative. The closure of the subspace $\{Xh_0: X \in C^*(R,U)\}$ is a reducing subspace of $(R,U)$ and contains $\mathcal H$ as an invariant subspace of $(R,U)$. So, we can, without loss of generality, assume $K$ to be the above space. Hence, $(R,U)$ is a $minimal$ dilation. The $\Gamma$--unitary $(R,U)$ is cyclic too (i.e., $K = \{Xh_0: X \in C^*(R,U)\}$) with the same cyclic vector $h_0$. Applying Gelfand theory to $C^*(R,U)$ and remembering that the joint spectrum of $(R,U)$ is contained in $b\Gamma$, we get a measure $\mu$ on $b\Gamma$ such that $(R,U)$ is unitarily equivalent to $(M_s^\mu,M_p^\mu)$ on $L^2(b\Gamma, \mu)$ and the $\Gamma$--isometry $(T,V)$ is the restriction of $(M_s^\mu,M_p^\mu)$ to $H^2(b\Gamma, \mu)$. Summing up, we have proved the following.

\begin{lem}\label{cyclic} A commuting pair of bounded operators $(T,V)$ is a cyclic $\Gamma$--isometry if and only if there is a regular Borel measure $\mu$ on $b\Gamma$ such that $(T,V)$ is unitarily equivalent to $(M_s^\mu,M_p^\mu)$ on $H^2(b\Gamma, \mu)$. \end{lem}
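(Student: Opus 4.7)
The plan is to prove the two directions separately, with the ``if'' direction being essentially a verification and the ``only if'' direction relying on the spectral theorem and Agler--Young's characterization of $\Gamma$--unitaries.

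For the easy direction, assume $(T,V)$ is unitarily equivalent to $(M_s^\mu, M_p^\mu)$ on $H^2(b\Gamma,\mu)$. I need to check three things: that $M_p^\mu$ is an isometry, that $M_s^\mu = (M_s^\mu)^* M_p^\mu$, and that $\|M_s^\mu\|\leq 2$. All three follow directly from the pointwise facts on $b\Gamma$: $|p|=1$, $s = \overline s p$, and $|s|\leq 2$. Then Theorem \ref{G-isometry} applies. Cyclicity is immediate because the polynomials in $s,p$ are dense in $H^2(b\Gamma,\mu)$ by definition, so the constant function $1$ is a cyclic vector.

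For the nontrivial direction, let $(T,V)$ be a cyclic $\Gamma$--isometry on $\mathcal{H}$ with cyclic vector $h_0$. By definition, $(T,V)$ extends to a $\Gamma$--unitary $(R,U)$ on a larger Hilbert space $K'$, so that $\mathcal{H}$ is invariant under $R, U$ with $T = R|_{\mathcal{H}}$ and $V = U|_{\mathcal{H}}$. Since $(R,U)$ is a $\Gamma$--unitary, both $R$ and $U$ are normal and commute, hence the $C^*$--algebra $C^*(R,U)$ they generate is commutative. Now replace $K'$ by the smaller subspace $K := \overline{\{X h_0 : X \in C^*(R,U)\}}$; this is reducing for $(R,U)$ and still contains $\mathcal{H}$ as an invariant subspace, so restricting we obtain a minimal $\Gamma$--unitary dilation for which $h_0$ is a cyclic vector in $K$ as well.

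The key step is then Gelfand theory applied to $C^*(R,U)$. Since the joint spectrum of a $\Gamma$--unitary lies in $b\Gamma$, we have $C^*(R,U) \cong C(\sigma(R,U)) \subseteq C(b\Gamma)$. The cyclic unit vector $h_0$ gives a positive linear functional $X \mapsto \langle X h_0, h_0\rangle$ on $C^*(R,U)$, which by Riesz representation corresponds to a regular Borel measure $\mu$ on $b\Gamma$. Cyclicity of $h_0$ then produces a unitary $K \to L^2(b\Gamma,\mu)$ sending $X h_0 \mapsto X$ (viewed via Gelfand) and intertwining $(R,U)$ with $(M_s^\mu, M_p^\mu)$ on $L^2(b\Gamma,\mu)$.

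The last technical point, and the one I expect to be the main obstacle, is to show that this unitary carries $\mathcal{H}$ onto $H^2(b\Gamma,\mu)$. Since $h_0$ is cyclic for $(T,V)$ on $\mathcal{H}$, the image of $\mathcal{H}$ is the closed span of $\{s^m p^n : m,n \geq 0\}$ in $L^2(b\Gamma,\mu)$, which is exactly $H^2(b\Gamma,\mu)$ by definition. Under this identification $T$ becomes $M_s^\mu$ and $V$ becomes $M_p^\mu$, completing the argument. No extra polynomial approximation beyond what is built into the definition of $H^2(b\Gamma,\mu)$ is required, which is what makes this last step go through cleanly.
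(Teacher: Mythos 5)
Your proposal is correct and takes essentially the same route as the paper: verify directly that $(M_s^\mu,M_p^\mu)$ is a cyclic $\Gamma$--isometry via Theorem \ref{G-isometry}, and for the converse pass to a $\Gamma$--unitary extension, cut down to the cyclic reducing subspace generated by $h_0$ under $C^*(R,U)$, and apply Gelfand theory together with the fact that $\sigma(R,U)\subseteq b\Gamma$. The only cosmetic slip is the line $C^*(R,U)\cong C(\sigma(R,U))\subseteq C(b\Gamma)$: the correct relation is that restriction gives a surjection $C(b\Gamma)\twoheadrightarrow C(\sigma(R,U))$, so the Riesz measure produced on $\sigma(R,U)$ is then viewed as a regular Borel measure on $b\Gamma$ supported there; this does not affect the argument.
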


\section{The Toeplitz corona theorem on the symmetrized bidisk -- Proof of Theorem \ref{TC-G}}

The following lemma plays a pivotal role in the proof of Theorem \ref{TC-G}.
\begin{lem}\label{factor}
Let $Y$ be a subset of $\mathbb G$ and $J:Y\times Y\to \mathcal{B}(\mathcal{L})$ be a continuous self-adjoint (i.e., $J((s,p),(t,q))=J((t,q),(s,p))^*$) function. If
\begin{eqnarray}\label{weaker assumption}
J \oslash k : \big((s,p), (t,q)\big) \mapsto J\big((s,p), (t,q)\big) \otimes k\big((s,p), (t,q)\big)
\end{eqnarray}is positive semi-definite for every $\mathcal{B}(\mathcal{L})$-valued admissible weak kernel $k$, then there is a completely positive function $\Delta: Y \times  Y \to \mathcal B(C(\overline{\mathbb D}), \mathcal L)$ such that for every $(s,p),(t,q)$ in $Y$,
\begin{eqnarray*}
J\big((s,p), (t,q)\big) = \Delta((s,p),(t,q))\big(1-\varphi(\cdot,s,p)\overline{\varphi(\cdot,t,q)}\big).
\end{eqnarray*}
\end{lem}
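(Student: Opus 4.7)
The plan is a Hahn--Banach separation argument in the spirit of Agler--McCarthy \cite{ag-Mc} and Dritschel--McCullough, adapted to operator-valued self-adjoint functions on $\mathbb G$. The key reformulation is Lemma \ref{admi}: a $\mathcal B(\mathcal L)$-valued weak kernel $k$ on $Y$ is admissible if and only if $(1-\varphi(\alpha,s,p)\overline{\varphi(\alpha,t,q)})\,k((s,p),(t,q))$ is positive semi-definite for every $\alpha\in\overline{\mathbb D}$. Hence the desired decomposition of $J$ amounts to expressing it as an integral, in the style of example (\ref{ExampleofDelta}), against the one-parameter family of test expressions $\{1-\varphi(\alpha,\cdot)\overline{\varphi(\alpha,\cdot)}\}_{\alpha\in\overline{\mathbb D}}$, weighted by positive semi-definite functions.

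First I would fix a finite subset $F=\{\lambda_1,\dots,\lambda_n\}\subseteq Y$ and consider the convex cone $\mathcal C_F$ in the self-adjoint part of $M_n(\mathcal B(\mathcal L))$ generated by block matrices of the form
$$\bigl[(1-\varphi(\alpha,\lambda_i)\overline{\varphi(\alpha,\lambda_j)})\,G(\lambda_i,\lambda_j)\bigr]_{i,j=1}^n,$$
with $\alpha\in\overline{\mathbb D}$ and $G$ a $\mathcal B(\mathcal L)$-valued positive semi-definite function on $F\times F$. The finite-level goal is to show that $J_F:=[J(\lambda_i,\lambda_j)]_{i,j=1}^n$ lies in $\overline{\mathcal C_F}$. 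If it did not, Hahn--Banach would produce a self-adjoint functional strictly separating $J_F$ from $\mathcal C_F$; the standard trace duality between self-adjoint elements of $M_n(\mathcal B(\mathcal L))$ and trace-class matrices would realise this functional as a self-adjoint block matrix $K$. Non-negativity of $K$ against every element of $\mathcal C_F$ forces, by Lemma \ref{admi} read in reverse, $K$ itself to be an admissible $\mathcal B(\mathcal L)$-valued weak kernel on $F$, while strict negativity against $J_F$ contradicts the standing hypothesis that $J\oslash k\ge 0$ for every such $k$.

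The final stage is to upgrade this qualitative containment into a concrete integral representation
$$J(\lambda_i,\lambda_j)=\int_{\overline{\mathbb D}}\bigl(1-\varphi(\alpha,\lambda_i)\overline{\varphi(\alpha,\lambda_j)}\bigr)\,\delta_F(\alpha,\lambda_i,\lambda_j)\,d\mu_F(\alpha),$$
with $\delta_F(\alpha,\cdot,\cdot)$ positive semi-definite for every $\alpha$, and then to glue these finite decompositions into a single $\Delta$ on $Y\times Y$. Carath\'eodory's theorem together with the compactness of $\overline{\mathbb D}$ delivers the integral form on each finite $F$, and setting $\Delta_F((s,p),(t,q))(h):=\int h(\alpha)\delta_F(\alpha,(s,p),(t,q))\,d\mu_F(\alpha)$ yields the desired completely positive factorisation on $F$. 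To assemble $\Delta$ on all of $Y\times Y$, I would exhaust $Y$ by an increasing sequence of finite subsets and extract a subnet limit of $(\mu_F,\delta_F)$ using Banach--Alaoglu on the space of $\mathcal B(\mathcal L)$-valued Borel measures on $\overline{\mathbb D}$, after normalising using the local norm bound on $J$ supplied by continuity. The main obstacle is precisely this patching step: one must verify that the limiting object is a genuinely completely positive function $\Delta:Y\times Y\to\mathcal B(C(\overline{\mathbb D}),\mathcal B(\mathcal L))$, and that the integral identity survives the limit simultaneously at every pair $((s,p),(t,q))$. Continuity of $J$ enters here in an essential way, both for securing the uniform bounds needed to apply Banach--Alaoglu and for passing the positivity conditions through the limit.
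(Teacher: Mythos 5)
Your overall strategy matches the paper's: fix a finite subset of $Y$, set up a cone/wedge of decomposable matrices, apply Hahn--Banach separation with trace duality to produce a separating self-adjoint block matrix, show this matrix encodes an admissible weak kernel, and derive a contradiction from the positivity hypothesis. However, there are three substantive gaps.

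First, you never establish that your cone $\mathcal C_F$ (or its closure $\overline{\mathcal C_F}$) is the right object to separate from. You separate $J_F$ from $\mathcal C_F$ assuming it fails to lie in $\overline{\mathcal C_F}$, but the conclusion you ultimately want is that $J_F$ admits a completely positive decomposition, i.e.\ lies in the wedge $\mathcal W$ of matrices of the form $\bigl[\Delta(\lambda_i,\lambda_j)\bigl(1-\varphi(\cdot,\lambda_i)\overline{\varphi(\cdot,\lambda_j)}\bigr)\bigr]$ with $\Delta$ completely positive. Passing from $\overline{\mathcal C_F}$ to $\mathcal W$ is exactly the point that requires work: the paper devotes a substantial net argument (using a uniform bound on $\langle\Delta_\nu(\lambda_i,\lambda_i)(1)u,u\rangle$ extracted from $1-|\varphi(\cdot,s_i,p_i)|^2\geq\epsilon$) to showing $\mathcal W$ is weak-star closed. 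Your proposed substitute --- Carath\'eodory's theorem together with compactness of $\overline{\mathbb D}$ --- does not apply here: the generating set is parametrized both by $\alpha\in\overline{\mathbb D}$ \emph{and} by arbitrary positive semi-definite $G$, which is an infinite-dimensional cone whenever $\mathcal L$ is infinite-dimensional, so the conic Carath\'eodory theorem gives nothing. Without a closedness argument, showing $J_F\in\overline{\mathcal C_F}$ does not produce the $\Delta$ you need.

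Second, you claim that non-negativity of $\tr(\,\cdot\,K)$ on $\mathcal C_F$ forces ``$K$ itself'' to be an admissible weak kernel. This is not quite right; what the trace pairing actually yields is admissibility of the \emph{transposed} kernel $K^t(\lambda_i,\lambda_j):=K(\lambda_j,\lambda_i)^t$, not of $K$. The pairing $\tr(TK)=\sum_{i,j}\tr(T_{ij}K_{ji})$ flips indices, and after expanding against rank-one test matrices one finds $\sum_{i,j}\langle K^t(\lambda_i,\lambda_j)u_j,u_i\rangle\ge0$. The same transpose appears in the final contradiction: one must verify that $\tr(J_FK)=\sum_{i,j}\sum_{m,n}\langle J_F(\lambda_i,\lambda_j)e_m,e_n\rangle\langle K^t(\lambda_i,\lambda_j)e_m,e_n\rangle$ so that the hypothesis $J\oslash K^t\ge0$ can be invoked with the explicit choice $u_i=\sum_m e_m\otimes e_m$. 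If you work with $K$ in place of $K^t$ you cannot close the argument.

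Third, your globalization step is both vaguer and more fragile than what is needed. The paper invokes Kurosh's theorem (a compactness statement for inverse limits) to glue the finite-level completely positive functions into a single $\Delta$ on $Y\times Y$; this is short once the finite case is in hand. Your alternative --- extracting a Banach--Alaoglu subnet of a family $(\mu_F,\delta_F)$ of $\mathcal B(\mathcal L)$-valued measures, and then arguing the integral identity and complete positivity survive the limit simultaneously at every pair --- is, as you yourself flag, ``the main obstacle,'' and as written is not a proof. In particular you would still need the uniform bounds that the paper extracts from $1-|\varphi(\cdot,s,p)|^2\ge\epsilon$ on each compact piece, and you would need a reason why the limiting object is a completely positive function rather than merely a weak limit of integrals.

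In short: the separation skeleton is right, but the three load-bearing steps --- closedness of the wedge, the transpose bookkeeping in the trace pairing, and the passage from finite subsets to $Y$ --- are each missing or incorrect as proposed.
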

\begin{proof}
We first prove the result for finite subsets of $Y$ and then apply Kurosh's theorem. Let $\mathcal F=\{(s_j,p_j):1\leq j\leq N\}$ be a finite subset of $Y$ of cardinality $N$. Consider the following subset of $N \times N$ self-adjoint operator matrices with entries in $\mathcal{B}(\mathcal{L})$,
\begin{align*}
\mathcal{W} & =
\big\{\left[\Delta\big((s_i,p_i),(s_j,p_j)\big)\left(1-\varphi(\cdot,s_i,p_i)\overline{\varphi(\cdot,s_j,p_j)}\right)\right]_{i,j=1}^N:\\&\Delta: \mathcal F \times \mathcal F \to \mathcal{B}\left(C(\overline{\mathbb D}), \mathcal{B}(\mathcal L)\right) \text {completely positive function}\big\}.
\end{align*}
The subset $\mathcal{W}$ of $\mathcal{B}(\mathcal{L}^N)$ is a wedge in the vector space of $N \times N$ self-adjoint matrices with entries from $\mathcal B(\mathcal L)$ in the sense that it is convex and if we multiply a member of $\mathcal{W}$ by a non-negative real number, then the element remains in $\mathcal{W}$. Since $\mathcal{B}(\mathcal{L}^N)$ is the dual of $\mathcal{B}_1(\mathcal{L}^N)$, the ideal of trace class operators acting on $\mathcal{L}^N$, it has its natural weak-star topology. We shall show that it is closed. This will require some work. We shall pick up the proof of the lemma after we show that the wedge is closed. Let
$$
K_\nu\big((s_i,p_i),(s_j,p_j)\big)=\Delta_\nu\big((s_i,p_i),(s_j,p_j)\big)\left(1-\varphi(\cdot,s_i,p_i)\overline{\varphi(\cdot,s_j,p_j)}\right)
$$
be a net in $\mathcal W$ which is indexed by $\nu$ in some index set and which converges to an $N \times N$ self-adjoint $\mathcal B(\mathcal L)$-valued matrix $K=(K_{ij})$ with respect to the weak-star topology. This means that for every $X=(X_{kl})\in \mathcal{B}_1(\mathcal{L}^N)$, the net of scalars $\tr(K_{\nu}X)$ converges to $\tr(KX)$. Let us use a special $X$. Consider two vectors $u$ and $v$ in $\mathcal{L}$ and choose $X$ to be the block operator matrix which has $u\otimes v$ in the $(ji)$-th entry and zeroes elsewhere. Then we get
$$
\langle \Delta_\nu\big((s_i,p_i),(s_j,p_j)\big)\left(1-\varphi(\cdot,s_i,p_i)\overline{\varphi(\cdot,s_j,p_j)}\right)u,v \rangle \to \langle K_{ij}u, v\rangle
$$
for all $i=1,2, \ldots , N$ and all $j=1,2, \ldots , N$. In particular, we have
$$
\langle \Delta_\nu\big((s_i,p_i),(s_i,p_i)\big)\left(1-|\varphi(\cdot,s_i,p_i)|^2\right)u,u \rangle \to \langle K_{ii}u, u\rangle
$$for all $u\in \mathcal L$ and $1\leq i \leq N$.  Let us recall that for any $(s,p) \in \mathbb G$, $\sup\{ | \varphi (\alpha , s, p) | : \alpha \in \overline{\mathbb D} \} < 1$, so that we have an $\epsilon >0$ satisfying $1-|\varphi(\cdot,s_i,p_i)|^2\geq \epsilon 1$ for each $1\leq i \leq N$. Hence for each $1\leq i \leq N$,
$$
\langle \Delta_\nu\big((s_i,p_i),(s_i,p_i)\big)\left(1-|\varphi(\cdot,s_i,p_i)|^2\right)u,u \rangle \geq \epsilon \langle \Delta_\nu\big((s_i,p_i),(s_i,p_i)\big)(1)u,u \rangle.
$$
Since the left hand side of the above inequality converges for every $i$ and there are only finitely many $i$'s, we have a positive constant $M(u)$ depending only on $u$ such that
$$
\sup_\nu \langle \Delta_\nu \big((s_i,p_i),(s_i,p_i)\big)(1)u,u \rangle <M(u).
$$
 Since we have $\|h\|_\infty^2-|h(\cdot)|^2\geq 0$ for every $h \in C(\overline{\mathbb D})$, hence
\begin{eqnarray*}
\langle \Delta_\nu\big((s_i,p_i),(s_i,p_i)\big)(|h|^2)u,u \rangle\leq \|h\|_\infty^2\langle \Delta_\nu\big((s_i,p_i),(s_i,p_i)\big)(1)u,u \rangle \leq \|h\|_\infty^2M(u).
\end{eqnarray*} For a completely positive function $\Delta$, we have for every $h_1,h_2 \in C(\overline{\mathbb D})$ and $u,v \in \mathcal L$,
$$
|\langle \Delta\big( (s,p), (t,q) \big)(h_1\overline{h_2})u,v\rangle|
\leq\langle \Delta\big( (s,p), (s,p) \big)(|h_1|^2)u,u\rangle\langle \Delta\big( (t,q), (t,q) \big)(|h_2|^2)v,v\rangle,
$$
which immediately gives a bound on off-diagonal entries
\begin{eqnarray*}
|\langle \Delta_\nu\big((s_i,p_i),(s_j,p_j)\big)(h)u,v \rangle|\leq \|h\|_\infty^2M(u)M(v),
\end{eqnarray*}for every $h \in C(\overline{\mathbb D})$, all $u,v \in \mathcal L$ and every $\nu$. Therefore, for every $h \in C(\overline{\mathbb D})$ and $u,v \in \mathcal L$, the net $\{\langle \Delta_\nu\big((s_i,p_i),(s_j,p_j)\big)(h)u,v \rangle\}$ is bounded, for each $1\leq i,j \leq N$. Since the set $\mathcal F$ is finite, we get a subnet $\nu_l$ such that $\{\langle \Delta_{\nu_l}\big((s_i,p_i),(s_j,p_j)\big)(h)u,v \rangle\}$ converges to some complex number (depending on $i,j,h,u$ and $v$). Now we define a completely positive function
$\Delta: \mathcal F \times \mathcal F \to \mathcal{B}\left(C(\overline{\mathbb D}), \mathcal{B}(\mathcal L)\right)$ by
$$
\langle \Delta\big( (s_i,p_i), (s_j,p_j) \big)(h)u,v \rangle = \lim_l\langle \Delta_{\nu_l}\big( (s_i,p_i), (s_j,p_j) \big)(h)u,v \rangle
$$ and extend it trivially to $Y \times Y$. Consequently, for every $h \in C(\overline{\mathbb D})$ and $u,v \in \mathcal L$,
$$
\langle \Delta\big( (s_i,p_i), (s_j,p_j) \big)(h)u,v \rangle =\langle K_{ij}u,v \rangle \text{ for each }1\leq i,j\leq N
$$ proving that $\mathcal{W}$ is weak-star closed and hence operator norm closed too.

 Continuing the proof of the lemma, for a function $\delta:\overline{\mathbb D}\times \mathbb G\times \mathbb G\to \mathcal B(\mathcal L)$ such that for each $\alpha \in \overline{\mathbb D}$, $\delta(\alpha, \cdot, \cdot)$ is a weak kernel on $\mathbb G$, let $\Delta^\delta_\mu$ be as defined in (\ref{ExampleofDelta}). Consider the two functions $b,d:\overline{\mathbb D}\times \mathbb G\times \mathbb G\to \mathcal B(\mathcal L)$ defined by
\begin{eqnarray}\label{the-b-kernel}
b(\alpha,(s,p),(t,q))=\frac{I_\mathcal L}{1-\varphi(\alpha,s,p)\overline{\varphi(\alpha,t,q)}},
\end{eqnarray}
and
\begin{eqnarray}\label{the-d-kernel}
d(\alpha,(s,p),(t,q))=\frac{[u(s,p) \otimes u(t,q)]}{1-\varphi(\alpha,s,p)\overline{\varphi(\alpha,t,q)}},
\end{eqnarray}where $\bf u:\mathbb G\to \mathcal L$ is a function and for two elements $u_1,u_2$ of $\mathcal L$, $(u_1\otimes u_2)$ denotes the bounded operator on $\mathcal L$ defined by
$$
(u_1\otimes u_2)(h)=\langle h,u_2\rangle u_1.
$$
Then for a probability measure $\mu$ on $\overline{\mathbb D}$, we have
$$\Delta^b_\mu\big( (s,p), (t,q) \big)\big(1-\varphi(\cdot,s,p)\overline{\varphi(\cdot,t,q)}\big)=I_\mathcal L \text{ for every $(s,p),(t,q)\in \mathbb G$}$$and
$$\Delta^d_\mu\big( (s,p), (t,q) \big)\big(1-\varphi(\cdot,s,p)\overline{\varphi(\cdot,t,q)}\big)={\bf u}(s,p)\otimes {\bf u}(t,q)\text{ for every }(s,p),(t,q)\in \mathbb G$$
 and hence we conclude that the block operator matrix with each entry being $I_{\mathcal L}$ is in $\mathcal W$ and if $u_1,u_2,\dots,u_N$ are any vectors in $\mathcal L$, then the $N \times N$ matrix
$$
D(i,j)=u_i\otimes u_j \text{ for each } 1\leq i,j \leq N
$$
is also in $\mathcal{W}$.

We now show that the restriction $J_{\mathcal F}$ of $J$ to $\mathcal F \times \mathcal F$ is in $\mathcal{W}$. Suppose on the contrary that $J_{\mathcal F}$ is not in $\mathcal{W}$. Then it is well-known as a consequence of the Hahn--Banach extension theorem that these two can be separated by a weak-star continuous linear functional $L$ on $\mathcal{B}(\mathcal{L}^N)$. Specifically, applying part (b) of Theorem 3.4 of \cite{Rud}, we get such an $L$ whose real part is non-negative on $\mathcal{W}$ and strictly negative on $J_{\mathcal F}$. We replace this linear functional by its real part, i.e., $\frac{1}{2}(L(T)+\overline{L(T)})$ and denote it by $L$ itself. Thus, without loss of generality we can take $L$ to be real-valued.

Since $L$ is weak-star continuous, $L$ has a specific form. In fact, there is an $N \times N$ self-adjoint operator matrix $K$ with entries in the ideal of trace class operators such that
$$ L(T)=\tr(TK). $$
 This is also well-known and can be found for example in Theorem 1.3 of Chapter V of \cite{conway func}. Let us define $K^t$ by $K^t(\lambda_i, \lambda_j)=K(\lambda_j, \lambda_i)^t$. Let $\{e_n : n \in \mathbb{N}\}$ be an orthonormal basis for $\mathcal{L}$. For $u=\sum c_m e_m$ and $v=\sum d_n e_n$ in $\mathcal{L}$, we make a note of the following fact about $K^t$, which will be used later in the proof.
\begin{eqnarray*}
\langle K^t(\lambda_i, \lambda_j)u,v \rangle&=&\sum_{m, n}c_m \bar{d_{n}}\langle K(\lambda_j, \lambda_i)^te_m,e_n \rangle\\
&=&\sum_{m,n}c_m \bar{d_{n}}\langle K(\lambda_j, \lambda_i)e_n,e_m \rangle = \langle K(\lambda_j, \lambda_i)\bar{v},\bar{u} \rangle,
\end{eqnarray*}
where $\bar{u}=\sum \bar{c}_m e_m$ and $\bar{v}=\sum \bar{d_n} e_n$.

It is simple to show that $K^t$ is a $\mathcal{B}(\mathcal{L})$-valued positive semi-definite kernel on $\mathcal F$, i.e.,
\begin{eqnarray}\label{action on D}
\sum_{i,j=1}^N\langle K^t(\lambda_i, \lambda_j)u_j,u_i \rangle \geq 0,
\end{eqnarray} where $u_1,u_2,\dots,u_N$ are arbitrary vectors in $\mathcal{L}$. The following shows that (\ref{action on D}) is the action of $L$ on the kernel $D(i,j)=[\bar{u_i} \otimes \bar{u_j}]$ and hence we are done.
\begin{eqnarray*}
0 \leq L(D)= \tr(D K)=\sum_{i,j=1}^N\tr(D_{ij}K_{ji})&=&\sum_{i,j=1}^N\tr([\bar{u_i} \otimes K_{ji}^*\bar{u_j}])=\sum_{i,j=1}^N\langle \bar{u_i}, K_{ji}^*\bar{u_j} \rangle \\&=& \sum_{i,j=1}^N \langle K_{ji}\bar{u_i}, \bar{u_j} \rangle = \sum_{i,j=1}^N \langle K^t(\lambda_i, \lambda_j)u_j,u_i \rangle.
\end{eqnarray*}
The next step is to show that $K^t$ is admissible. Lemma \ref{admi} will be used now. This is a matter of choosing the completely positive function judiciously. Note that for each $\alpha \in \overline{\mathbb D}$ and a function $u:\mathbb G \to \mathcal L$, the function $\Delta^\alpha: \mathbb G \times \mathbb G \to \mathcal{B}\left(C(\overline{\mathbb D}), \mathcal{B}(\mathcal L)\right)$ defined by
$$
\Delta^\alpha\big( (s,p), (t,q) \big)(h)=h(\alpha)[u(s,p)\otimes u(t,q)]
$$is completely positive, because $\Delta^\alpha=\Delta^\delta_\mu$, as defined in (\ref{ExampleofDelta}) with $\delta(\cdot,(s,p),(t,q))=u(s,p)\otimes u(t,q)$ and $\mu$ being the point mass measure at $\alpha$. This implies that for each $\alpha\in \overline{\mathbb D}$ and vectors $u_1,u_2,\dots,u_N$ in $\mathcal L$, the following $\mathcal B(\mathcal L)$-valued $N\times N$ matrix
$$
A(\alpha)=\left(\left(\big(1-\varphi(\alpha,s_i,p_i)\overline{\varphi(\alpha,s_j,p_j)}\big)[u_i\otimes u_j]\right)\right)_{i,j=1}^N
$$is in $\mathcal{W}$. The fact that $L$ is non-negative on $\mathcal{W}$ shows that $K^t$ is admissible.

Therefore by hypothesis, the $\mathcal{B}(\mathcal{L} \otimes \mathcal{L})$-valued function $J_{\mathcal F} \oslash K^t$ on $Y \times Y$ is positive semi-definite, which means that for every choice of vectors $\{u_i\}_{i=1}^N$ in $\mathcal{L} \otimes \mathcal{L}$,  we have
\begin{eqnarray}\label{fun}
\sum_{i,j=1}^N \langle J_{\mathcal F} \oslash K^t(\lambda_i, \lambda_j)u_j, u_i\rangle \geq 0.
\end{eqnarray} For a finite subset $\mathcal{F}=\{1,2,\dots,R\}$ of $\mathbb{N}$, choose $u_i=\sum_{m =1}^R e_m \otimes e_m$ for each $i$. Note that for this choice of $u_i$, (\ref{fun}) is the same as
\begin{eqnarray}\label{fun1}
\sum_{i,j=1}^N\sum_{m, n =1}^R \langle J_{\mathcal F} (\lambda_i, \lambda_j)e_m, e_n \rangle\langle K^t(\lambda_i, \lambda_j)e_m, e_n \rangle \geq 0.
\end{eqnarray}
On the other hand,
\begin{eqnarray*}
L(J_{\mathcal F})&=&\sum_{i,j=1}^N\tr(J_{\mathcal F}(\lambda_i, \lambda_j)K(\lambda_j, \lambda_i))
\\
&=&
\sum_{i,j=1}^N\sum_{n =1}^\infty\langle J_{\mathcal F}(\lambda_i, \lambda_j)K(\lambda_j, \lambda_i)e_n, e_n\rangle\\
&=&
\sum_{i,j=1}^N\sum_{m, n=1}^\infty \langle J_{\mathcal F} (\lambda_i, \lambda_j)e_m, e_n \rangle\langle K(\lambda_j, \lambda_i)e_n, e_m \rangle \\
&=&\sum_{i,j=1}^N\sum_{m, n=1}^\infty  \langle J_{\mathcal F} (\lambda_i, \lambda_j)e_m, e_n \rangle\langle K^t(\lambda_i, \lambda_j)e_m, e_n \rangle \geq 0.
\end{eqnarray*}
 the last inequality following from (\ref{fun1}). Now $L(J_{\mathcal F})$ being non-negative is a contradiction to the assumption that $J_{\mathcal F}$ not in $\mathcal{W}$. Therefore the restriction $J_{\mathcal F}$ of $J$ to every finite subset $\mathcal F$ of $Y$ must be in $\mathcal{W}$. Now an application of Kurosh's theorem finishes the proof.
 \end{proof}


We shall actually prove the following general theorem from which the Toeplitz corona theorem for the symmetrized bidisk follows.
\begin{theorem}\label{gentpsymmbiD}
Let $\mathcal{L}_1, \mathcal{L}_2$ and $\mathcal{L}_3$ be Hilbert spaces and $Y$ be a subset of $\mathbb G$. Suppose $\Phi : Y\to \mathcal{B}(\mathcal{L}_1,\mathcal{L}_2)$ and $\Theta:Y\to \mathcal{B}(\mathcal{L}_3,\mathcal{L}_2)$ are given functions. Then the following statements are equivalent:
\begin{enumerate}
\item[(1)] There exists a function $\Psi$ in the closed unit ball of $H^\infty\big(\mathbb{G},\mathcal{B}(\mathcal{L}_3,\mathcal{L}_1)\big)$ such that
$$
\Phi(s,p)\Psi(s,p)=\Theta(s,p)
$$ for all $(s,p) \in Y$;
\item[(2)] The function
$$[\Phi(s,p)\Phi(t,q)^*-\Theta(s,p)\Theta(t,q)^*]\oslash k\big((s,p),(t,q)\big)$$ is positive semi-definite on $Y$ for every $\mathcal{B}(\mathcal{L}_2)$-valued admissible kernel $k$ on $Y$;
\item[(3)] There exists a completely positive function $\Delta:Y \times Y \to \mathcal B\big(C(\overline{\mathbb D}), \mathcal B(\mathcal L_2)\big)$ such that for all $(s,p)$, $(t,q)$ in $Y$,
\begin{eqnarray*}
\Phi(s,p)\Phi(t,q)^*-\Theta(s,p)\Theta(t,q)^*&=&\Delta ( (s,p), (t,q)) \big( 1 - \varphi(\cdot, s, p) \overline{\varphi(\cdot , t, q)} \big).
\end{eqnarray*}
\end{enumerate}
\end{theorem}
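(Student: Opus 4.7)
The plan is to prove the equivalences via the cycle $(1) \Rightarrow (3) \Rightarrow (2) \Rightarrow (3) \Rightarrow (1)$, leveraging the Realization Theorem for Operator-Valued Functions together with Lemmas \ref{admi}, \ref{crucial-lemma}, and \ref{factor}.

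The two routine implications are $(1) \Rightarrow (3)$ and $(2) \Rightarrow (3)$. For $(1) \Rightarrow (3)$, I apply condition (D) of the Realization Theorem to $\Psi$ to produce a completely positive $\Delta_\Psi$ with $I_{\mathcal{L}_1} - \Psi(s,p)\Psi(t,q)^* = \Delta_\Psi((s,p),(t,q))(1 - \varphi(\cdot,s,p)\overline{\varphi(\cdot,t,q)})$; since $\Phi\Psi = \Theta$ yields $\Phi\Phi^* - \Theta\Theta^* = \Phi(I - \Psi\Psi^*)\Phi^*$, the sandwich $\Delta((s,p),(t,q))(h) := \Phi(s,p)\Delta_\Psi((s,p),(t,q))(h)\Phi(t,q)^*$ is completely positive into $\mathcal{B}(C(\overline{\mathbb{D}}), \mathcal{B}(\mathcal{L}_2))$ and satisfies the required identity. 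For $(2) \Rightarrow (3)$, I apply Lemma \ref{factor} to the self-adjoint $J := \Phi\Phi^* - \Theta\Theta^*$. The minor wrinkle is that Lemma \ref{factor} asks for $J \oslash k \geq 0$ for every admissible \emph{weak} kernel, while (2) supplies it only for admissible kernels; this gap is bridged by replacing $k$ with $k + \epsilon k_0$ for a fixed admissible kernel $k_0$, which is again admissible and strictly positive on the diagonal, and then letting $\epsilon \downarrow 0$.

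For $(3) \Rightarrow (2)$, I decompose $\Delta$ by Lemma \ref{crucial-lemma} into a Hilbert space $\mathcal{H}$, a unital $*$-representation $\pi : C(\overline{\mathbb{D}}) \to \mathcal{B}(\mathcal{H})$, and an operator family $E(s,p) := L(s,p)[1] \in \mathcal{B}(\mathcal{H}, \mathcal{L}_2)$ for which
$$\Phi(s,p)\Phi(t,q)^* - \Theta(s,p)\Theta(t,q)^* = E(s,p)\bigl[I_{\mathcal{H}} - \pi(\varphi(\cdot,s,p))\pi(\varphi(\cdot,t,q))^*\bigr]E(t,q)^*.$$
Representing $\mathcal{H}$ as a direct integral over $\overline{\mathbb{D}}$ on which $\pi(h)$ is fiberwise multiplication by $h(\alpha)$, the positivity of $[\Phi\Phi^* - \Theta\Theta^*] \oslash k$ for an admissible $\mathcal{B}(\mathcal{L}_2)$-valued kernel $k$ reduces, after absorbing $E(s,p)$ and $E(t,q)^*$ into the test vectors, to the integral over $\alpha \in \overline{\mathbb{D}}$ of the $\mathcal{B}(\mathcal{L}_2)$-valued function $(1 - \varphi(\alpha,s,p)\overline{\varphi(\alpha,t,q)})\,k((s,p),(t,q))$; each integrand is positive semi-definite by Lemma \ref{admi}, and this positivity survives the integration.

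The main work, and the step I expect to be the principal obstacle, is $(3) \Rightarrow (1)$, executed by a lurking-isometry argument in the spirit of the proof of Theorem \ref{Interpolate}. Rearranging the above factorization gives
$$\Phi(s,p)\Phi(t,q)^* + E(s,p)\pi(\varphi_s)\pi(\varphi_t)^*E(t,q)^* = \Theta(s,p)\Theta(t,q)^* + E(s,p)E(t,q)^*,$$
where $\varphi_s := \varphi(\cdot, s, p)$. This identifies, for every $v \in \mathcal{L}_2$, the squared norms of $\xi(s,p,v) := (\Theta(s,p)^*v,\, E(s,p)^*v)^T \in \mathcal{L}_3 \oplus \mathcal{H}$ and $\eta(s,p,v) := (\Phi(s,p)^*v,\, \pi(\varphi_s)^*E(s,p)^*v)^T \in \mathcal{L}_1 \oplus \mathcal{H}$, so that $\xi \mapsto \eta$ extends to an isometry on the closed linear span. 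The standard enlargement device then promotes this to a unitary $V = \bigl(\begin{smallmatrix}A & B \\ C & D\end{smallmatrix}\bigr) : \mathcal{L}_3 \oplus \tilde{\mathcal{H}} \to \mathcal{L}_1 \oplus \tilde{\mathcal{H}}$ together with a unital $*$-representation $\tilde\pi$ on $\tilde{\mathcal{H}}$ extending $\pi$. Writing out $V^*\eta = \xi$, solving the second coordinate for $E(s,p)^*v$ (the operator $I - D^*\tilde\pi(\varphi_s)^*$ is invertible because $\sup_{\alpha \in \overline{\mathbb{D}}}|\varphi(\alpha,s,p)| < 1$ on $\mathbb{G}$), substituting into the first, taking adjoints, and applying the resolvent identity $(I - \tilde\pi D)^{-1}\tilde\pi = \tilde\pi(I - D\tilde\pi)^{-1}$ delivers $\Psi(s,p) := A + B\tilde\pi(\varphi(\cdot,s,p))(I - D\tilde\pi(\varphi(\cdot,s,p)))^{-1}C$ satisfying $\Phi(s,p)\Psi(s,p) = \Theta(s,p)$ on $Y$; condition (R) of the Realization Theorem then certifies that $\Psi$ lies in the closed unit ball of $H^\infty(\mathbb{G}, \mathcal{B}(\mathcal{L}_3, \mathcal{L}_1))$. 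Tracking the adjoint and resolvent bookkeeping here so that the lurking-isometry formula matches the Realization Theorem's exact form is the most delicate aspect of the argument.
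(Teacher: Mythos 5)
Your proof is correct and rests on the same two pivots as the paper's — Lemma \ref{factor} for $(2)\Rightarrow(3)$ and the lurking-isometry construction for $(3)\Rightarrow(1)$ — though you close the cycle by the longer detour $(1)\Rightarrow(3)\Rightarrow(2)$ (via part (D) of the Realization Theorem and a direct-integral model of $\pi$ plus a Schur-product observation) where the paper instead proves $(1)\Rightarrow(2)$ directly via part (M) applied to $\Psi$. Your $\epsilon$-perturbation $k\mapsto k+\epsilon k_0$ also reconciles Lemma \ref{factor}'s weak-kernel hypothesis with the kernel-only statement of condition (2), a small point the paper passes over in silence, and your care to extend $\pi$ to a $\tilde\pi$ on the enlarged $\tilde{\mathcal{H}}$ before invoking condition (R) is likewise a detail the paper leaves implicit.
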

\begin{proof}
%
The proof will require the technique of the proof of the Realization Theorem.

 $(1)\Rightarrow(2):$  Suppose $(1)$ holds.
Since $\Psi$ is in the closed unit ball of $H^\infty\big(\mathbb{G},\mathcal{B}(\mathcal{L}_3,\mathcal{L}_2)\big)$, we apply the realization theorem with $Y=\mathbb G$ and $f=\Psi$ to get, by part $({\bf{M}})$ of The Realization Theorem,
$$
\left(I_{\mathcal{L}_2}-\Psi(s,p)\Psi(t,q)^*\right)\oslash k\big((s,p),(t,q)\big)
$$ is positive semi-definite for every $\mathcal{B}(\mathcal{L}_2)$-valued admissible kernel $k$ on $\mathbb{G}$.
 Now part $(2)$ follows from the following simple observation:
\begin{eqnarray*}
&&[\Phi(s,p)\Phi(t,q)^*-\Theta(s,p)\Theta(t,q)^*]\oslash k\big((s,p),(t,q)\big) \\
&=& \Phi(s,p)\big(I_{\mathcal{L}_1}-\Psi(s,p)\Psi(t,q)^*\big)\Phi(t,q)^*\oslash k\big((s,p),(t,q)\big).
\end{eqnarray*}

$(2)\Rightarrow(3):$ This is Lemma \ref{factor}.

$(3)\Rightarrow(1):$ This part of the proof uses a lurking isometry argument to construct the function $\Psi$. Suppose there exists a completely positive function $\Delta:Y \times Y \to \mathcal B\big(C(\overline{\mathbb D}), \mathcal B(\mathcal L)\big)$ such that for every $(s,p)$, $(t,q)$ in $Y$,
\begin{eqnarray*}
\Phi(s,p)\Phi(t,q)^*-\Theta(s,p)\Theta(t,q)^*&=&\Delta ( (s,p), (t,q)) \big( 1 - \varphi(\cdot, s, p) \overline{\varphi(\cdot , t, q)} \big) .
\end{eqnarray*}
We re-arrange the terms in the above equation and apply Lemma \ref{crucial-lemma} to obtain a Hilbert space $\mathcal H$, a function $L:Y\to B\big(C(\overline{\mathbb D}), \mathcal B(\mathcal H,\mathcal L_2)\big)$ and a unital $*$-representation $\pi:C(\overline{\mathbb D})\to \mathcal B(\mathcal H)$  such that
$$
\Phi(s,p)\Phi(t,q)^*+L(s,p)(\varphi(\cdot,s,p))L(t,q)(\varphi(\cdot,t,q))^*=\Theta(s,p)\Theta(t,q)^*+L(s,p)(1)L(t,q)(1)^*,
$$which implies that there exists an isometry $V_1$ from $$\overline{\text{span}}\{\Phi(t,q)^*e\oplus L(t,q)(\varphi(\cdot,t,q))^*e:(t,q)\in Y\; e\in \mathcal L_2\}\subset \mathcal{L}_1 \oplus \mathcal{H}$$ onto
$$\overline{\text{span}}\{\Theta(t,q)^*e\oplus L(t,q)(1)^*e :(t,q)\in Y\; e\in \mathcal L_2\}\subset \mathcal{L}_3 \oplus \mathcal{H}$$ such that for all $(t,q)\in Y$ and $e\in \mathcal L_2$,
\begin{eqnarray}\label{tpkaction}
\left(
   \begin{array}{c}
   \Phi(t,q)^* \\ \pi(\varphi(\cdot,t,q))^*L(t,q)(1)^*
   \end{array}
   \right)e \xrightarrow{V_1} \left(
   \begin{array}{c}
   \Theta(t,q)^* \\  L(t,q)(1)^*
   \end{array}
   \right)e.
 \end{eqnarray}
We add an infinite-dimensional summand to $\mathcal{H}$, if necessary, to extend $V_1$ as a unitary from $\mathcal{L}_1 \oplus \mathcal{H}$ onto $\mathcal{L}_3 \oplus \mathcal{H}$. Decompose $V_1$ as the $2\times 2$ block operator matrix
 $$\left(\begin{array}{cc}
       A_1 & B_1\\
       C_1 & D_1\\
     \end{array}
     \right)
 $$and define the function $\Psi$ on $\mathbb G$ by
 $$
 \Psi(t,q)^*=A_1+B_1\pi(\varphi(\cdot,t,q))^*(I_\mathcal H-D_1\pi(\varphi(\cdot,t,q))^*)^{-1}C_1.
$$ Then by the Realization Theorem $\Psi$ is a contractive multiplier and by (\ref{tpkaction}) it satisfies $\Psi(t,q)^*\Phi(t,q)^*=\Theta(t,q)^*$ for all $(t,q)$ in $Y$. Hence $(1)$ holds.
\end{proof}
\begin{proof}[\underline{Proof of Theorem \ref{TC-G}}:]
Note that equivalence of part (1) and (3) in Theorem \ref{TC-G} follows from Theorem \ref{gentpsymmbiD} when one chooses $Y=\mathbb G$ and $\Theta=\sqrt{\delta}$. We complete the proof of Theorem \ref{TC-G} by establishing $(1)\Rightarrow(2)\Rightarrow(3)$.

$(1)\Rightarrow(2):$ Denote the operator $(M^\mu_{{\psi_1}_r},M^\mu_{{\psi_2}_r},\dots,M^\mu_{{\psi_N}_r})^t$ by $M^\mu_{\psi_r}$. The inequality in part (1) shows that
$$M^{\mu *}_{\psi_r}M^\mu_{\psi_r}\leq \frac{1}{\delta} I_{H^2(b\Gamma,\mu)},$$which implies
$$
M^{\mu}_{\psi_r}{M^{\mu}_{\psi_r}}^*\leq\frac{1}{\delta}I_N,
$$which after conjugation by $(M^\mu_{{\varphi_1}_r},M^\mu_{{\varphi_2}_r},\dots,M^\mu_{{\varphi_N}_r})=:M^\mu_{\varphi_r}$ gives
$$
M^\mu_{\varphi_r} M^{\mu}_{\psi_r}{M^{\mu *}_{\psi_r}}{M^\mu_{\varphi_r}}^*\leq \frac{1}{\delta}M^\mu_{\varphi_r}{M^\mu_{\varphi_r}}^*,
$$whish establishes part (2), since $M^\mu_{\varphi_r} M^{\mu}_{\psi_r}=I_{H^2(b\Gamma,\mu)}$.

$(2)\Rightarrow(3):$ Let $k$ be an admissible $\mathcal B (\mathcal L_2)$-valued kernel on $\mathbb G$. As in \eqref{kernelspace}, we get a Hilbert space $H_k$ of $\mathcal L_2$-valued functions on $\mathbb G$. Define two operators $S$ and $P$ on $H_k$ by
$$Sf(s,p) = sf(s,p) \text{ and } Pf(s,p) = pf(s,p), \text{ where } (s,p) \in \mathbb G.$$
Since $k$ is admissible, the pair $(S,P)$ is a $\Gamma$--contraction. Hence, by Theorem \ref{dilation}, there is a $\Gamma$--coisometry $(S^\flat, P^\flat)$ which extends $(S,P)$. By assumption, we have
$$\Phi_r(M_s^\mu, M_p^\mu) \Phi_r(M_s^\mu, M_p^\mu)^* - \delta I \ge 0 \text{ for all } 0<r<1$$
for every measure $\mu$ on $b\Gamma$. By virtue of Lemma \ref{cyclic}, this means that
$$\Phi_r(T, V) \Phi_r(T, V)^* - \delta I \ge 0 \text{ for all } 0<r<1$$
and for any cyclic $\Gamma$--isometry $(T,V)$.

Now suppose $(T,V)$ is a $\Gamma$--isometry on $\mathcal H$ and $h \in \mathcal H$. Consider the subspace
$$ \mathcal M = \overline{\rm span} \{ T^mV^nh : m,n \ge 0\}. $$
This is an invariant subspace for $T$ and $V$. Let $T^\prime = T|_{\mathcal M}$ and $V^\prime = V|_{\mathcal M}$. Then, $(T^\prime , V^\prime)$ is a cyclic $\Gamma$--isometry. So, for all  $0<r<1$, we have
$$\langle \Phi_r(T, V) \Phi_r(T, V)^* h , h \rangle = \| \Phi_r(T, V)^* h \|^2 \ge \| P_{\mathcal M} \Phi_r(T, V)^* h \|^2 = \| (\Phi_r(T^\prime , V^\prime)^* h \|^2 \ge \delta \| h \|^2 $$
because of cyclicity of $(T^\prime , V^\prime)$. Thus we have
$$\Phi_r(T, V) \Phi_r(T, V)^* - \delta I \ge 0 \text{ for all } 0<r<1$$
and for any $\Gamma$--isometry $(T,V)$. Now, making use of Theorem \ref{dilation}, we get
$$\Phi_r(S, P) \Phi_r(S, P)^* - \delta I \ge 0 \text{ for all } 0<r<1.$$
This implies that $$ M_\Phi M_\Phi^* - \delta I = \Phi(S, P) \Phi(S, P)^* - \delta I \ge 0.$$ That is what was required to prove.
\end{proof}
\begin{rem}
We would like to conclude by noting that the proof of $(2)\Rightarrow(3)$ of Theorem \ref{TC-G} needs a characterization of cyclic $\Gamma$-isometries. Since a $\Gamma$-isometry in general cannot be obtained as a symmetrization of a pair of isometries (see \cite{ay-jot} for more on $\Gamma$-isometries), the result on cyclic $\Gamma$-isometries cannot be made to follow from the corresponding result on pairs of isometries. This is an example of the challenges that we alluded to at the end of Section 1.
\end{rem}
%
%
%

\end{document}